\documentclass[12pt,reqno]{amsart}


%
\usepackage{pdfsync}

%
\usepackage{amssymb}

%

%
\usepackage{mathrsfs}

%
\usepackage{bbm}

%

%
\DeclareMathAlphabet{\mathpzc}{OT1}{pzc}{m}{it}

%
\usepackage[all]{xy}

%
\entrymodifiers={+!!<0pt,\fontdimen22\textfont2>}

%
\usepackage{tikz}
\usetikzlibrary{arrows,decorations.pathmorphing,decorations.pathreplacing,positioning,shapes.geometric,shapes.misc,decorations.markings,decorations.fractals,calc,patterns}

\usepackage{graphicx}

\setlength{\textwidth}{155mm}
\setlength{\textheight}{237mm}
\addtolength{\oddsidemargin}{-1.75cm}
\addtolength{\evensidemargin}{-1.75cm}
\addtolength{\topmargin}{-15mm}

\setlength{\marginparwidth}{0.9in}
\let\oldmarginpar\marginpar
\renewcommand\marginpar[1]{\-\oldmarginpar[\raggedleft\footnotesize #1]%
{\raggedright\footnotesize #1}}


%

\def\cX{\mathscr{X}}
\def\cY{\mathscr{Y}}

%

%
\def\Bk{\mathbbm{k}}

%

%

%
\def\sA{\mathsf{A}}
\def\sB{\mathsf{B}}
\def\sC{\mathsf{C}}

\def\sE{\mathsf{E}}
\def\sF{\mathsf{F}}
\def\sG{\mathsf{G}}
\def\sH{\mathsf{H}}

\def\sM{\mathsf{M}}

\def\sP{\mathsf{P}}

\def\sS{\mathsf{S}}
\def\sT{\mathsf{T}}
\def\sU{\mathsf{U}}

\def\sX{\mathsf{X}}
\def\sY{\mathsf{Y}}

%


%

\def\add{\mathsf{add}}

\def\adots{\mathinner{\mkern1mu\raise1.0pt\vbox{\kern7.0pt\hbox{.}}\mkern2mu\raise4.0pt\hbox{.}\mkern2mu\raise7.0pt\hbox{.}\mkern1mu}}

\def\ann{\operatorname{ann}}
\def\ast{{\textstyle *}}

\def\Coker{\operatorname{Coker}}

\def\End{\operatorname{End}}

\def\Ext{\operatorname{Ext}}

\def\Hom{\operatorname{Hom}}

\def\Image{\operatorname{Im}}

\def\Ker{\operatorname{Ker}}

\def\mod{\mathsf{mod}}
\def\Mod{\mathsf{Mod}}
\def\Fac{\mathsf{Fac}}

\def\prj{\mathsf{prj}}

\def\rad{\operatorname{rad}}

\def\thick{\mathsf{thick}}

%

%

%

%
\newtheorem{Lemma}{Lemma}[section]
\newtheorem{Theorem}[Lemma]{Theorem}
\newtheorem{Proposition}[Lemma]{Proposition}
\newtheorem{Corollary}[Lemma]{Corollary}

\theoremstyle{definition}
\newtheorem{Definition}[Lemma]{Definition}

\newtheorem{Remark}[Lemma]{Remark}

\begin{document}

\setlength{\parindent}{0pt}
\setlength{\parskip}{7pt}

\title[Intermediate co-t-structures]{Intermediate co-t-structures, two-term silting objects, $\tau$-tilting modules, and torsion classes}

\author{Osamu Iyama}
\address{Graduate School of Mathematics, Nagoya University Chikusa-ku,
  Nagoya, 464-8602 Japan} 
\email{iyama@math.nagoya-u.ac.jp}
\urladdr{http://www.math.nagoya-u.ac.jp/~iyama}

\author{Peter J\o rgensen}
\address{School of Mathematics and Statistics, Newcastle University,
  Newcastle NE1 7RU}
\email{peter.jorgensen@ncl.ac.uk}
\urladdr{http://www.staff.ncl.ac.uk/peter.jorgensen}

\author{Dong Yang}
\address{Department of Mathematics, Nanjing University, Nanjing 210093, P. R. China}
\email{dongyang2002@googlemail.com}



\thanks{2010 {\em Mathematics Subject Classification.} 18E30, 18E40}

\begin{abstract} 

  If $( \sA , \sB )$ and $( \sA^{\prime } , \sB^{ \prime } )$ are
  co-t-structures of a triangulated category, then $( \sA^{\prime } ,
  \sB^{ \prime } )$ is called intermediate if $\sA \subseteq \sA^{
    \prime }\subseteq \Sigma \sA$.  Our main results show that
  intermediate co-t-structures are in bijection with
  two-term silting subcategories, and also with support $\tau$-tilting
  subcategories under some assumptions.
  We also show that support $\tau$-tilting subcategories are in bijection
  with certain finitely generated torsion classes.
  These results generalise work by Adachi, Iyama, and Reiten.

\end{abstract}

\maketitle

\setcounter{section}{-1}
\section{Introduction}
\label{sec:introduction}

The aim of this paper is to discuss the relationship between
the following objects.
\begin{itemize}

  \item Intermediate co-t-structures.

\smallskip

  \item Two-term silting subcategories.

\smallskip

  \item Support $\tau$-tilting subcategories.

\smallskip

  \item Torsion classes.
\end{itemize}
The motivation is that if $\sT$ is a triangulated category with
suspension functor $\Sigma$ and $( \sX , \sY )$ is a t-structure of
$\sT$ with heart $\sH = \sX \cap \Sigma \sY$, then there is a
bijection between ``intermediate'' t-structures $( \sX^{ \prime } ,
\sY^{ \prime } )$ with $\Sigma \sX \subseteq \sX^{ \prime } \subseteq
\sX$ and torsion pairs of $\sH$.  This is due to \cite[thm.\ 3.1]{BR}
and \cite[prop.\ 2.1]{HRS}; see \cite[prop.\ 2.3]{W}.

We will study a co-t-structure analogue of this which also involves
silting subcategories, that is, full subcategories $\sS \subseteq \sT$ with
thick closure equal to $\sT$ which satisfy $\Hom_{ \sT }( \sS,\Sigma^i
\sS ) = 0$ for $i \geq 1$.  Silting subcategories are a useful
generalisation of tilting subcategories.

The following result follows from the bijection between bounded
co-t-structures and silting subcategories in \cite[cor.\ 5.9]{MSSS}.
See \cite{P} and \cite{AI} for background on co-t-structures and
silting subcategories.  Note that the \emph{co-heart} of a
co-t-structure $( \sA , \sB )$ is $\sA \cap \Sigma^{-1}\sB$.  If
$\sF$, $\sG$ are full subcategories of a triangulated category then
$\sF * \sG$ denotes the full subcategory of objects $e$ which permit a
distinguished triangle $f \rightarrow e \rightarrow g$ with $f \in
\sF$, $g \in \sG$.
\begin{Theorem}[=Theorem \ref{t:co-t-str-and-silting}]
Let $\sT$ be a triangulated category, $( \sA , \sB )$ a bounded
co-t-structure of $\sT$ with co-heart $\sS$. Then we have a
bijection between the following sets.
\begin{itemize}

  \item[(i)] Co-t-structures $( \sA^{\prime } , \sB^{
    \prime } )$ of $\sT$ with $\sA \subseteq \sA^{ \prime }\subseteq
  \Sigma \sA$.

\smallskip

  \item[(ii)] Silting subcategories of $\sT$ which are in $\sS *
    \Sigma \sS$. 

\end{itemize}
\end{Theorem}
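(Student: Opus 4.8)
The plan is to deduce this from the bijection between bounded co-t-structures of $\sT$ and silting subcategories of $\sT$ in \cite[cor.\ 5.9]{MSSS}: that bijection sends a bounded co-t-structure $(\sA',\sB')$ to its co-heart $\sS' = \sA' \cap \Sigma^{-1}\sB'$, and sends a silting subcategory $\sS'$ to the co-t-structure whose co-aisle $\sA'$ is the smallest subcategory containing $\bigcup_{n \geq 0}\Sigma^{-n}\sS'$ and closed under extensions and direct summands, and whose co-co-aisle $\sB'$ is the smallest subcategory containing $\bigcup_{n \geq 1}\Sigma^{n}\sS'$ and closed under extensions and direct summands; in particular $\sA$ and $\sB$ arise in this way from $\sS$. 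It therefore suffices to show that, under this bijection, $(\sA',\sB')$ lies in (i) if and only if its co-heart lies in $\sS * \Sigma\sS$. I will use without comment that any co-t-structure $(\sA,\sB)$ satisfies $\sB = \sA^{\perp}$ and $\sA = {}^{\perp}\sB$ (each follows by splitting a truncation triangle $a \rightarrow X \rightarrow b \rightarrow \Sigma a$ with $a \in \sA$, $b \in \sB$), that $\Sigma$, being an autoequivalence, carries these facts to $\Sigma\sB = (\Sigma\sA)^{\perp}$, and that a co-t-structure $(\sA',\sB')$ with $\sA \subseteq \sA' \subseteq \Sigma\sA$ is automatically bounded (from $\sA \subseteq \sA'$ one gets $\bigcup_n \Sigma^n \sA' = \sT$, and from $\sA' \subseteq \Sigma\sA$ one gets $\Sigma\sB = (\Sigma\sA)^{\perp} \subseteq (\sA')^{\perp} = \sB'$, hence $\bigcup_n \Sigma^{-n}\sB' = \sT$), so that \cite[cor.\ 5.9]{MSSS} does apply to it.

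The crux is the identity
\[ \sS * \Sigma\sS \;=\; \Sigma\sA \cap \Sigma^{-1}\sB . \]
The inclusion ``$\subseteq$'' is routine: $\sS \subseteq \sA \subseteq \Sigma\sA$ and $\Sigma\sS \subseteq \Sigma\sA$, while $\sS \subseteq \Sigma^{-1}\sB$ and $\Sigma\sS \subseteq \sB \subseteq \Sigma^{-1}\sB$, and both $\Sigma\sA$ and $\Sigma^{-1}\sB$ are closed under extensions. For ``$\supseteq$'', given $X \in \Sigma\sA \cap \Sigma^{-1}\sB$ I would take a truncation triangle $a \rightarrow X \rightarrow b \rightarrow \Sigma a$ with $a \in \sA$, $b \in \sB$. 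Rotating it to $b \rightarrow \Sigma a \rightarrow \Sigma X \rightarrow \Sigma b$ and using $b \in \sB$, $\Sigma X \in \sB$, and closure of $\sB$ under extensions shows $\Sigma a \in \sB$, so $a \in \sA \cap \Sigma^{-1}\sB = \sS$. Rotating it to $X \rightarrow b \rightarrow \Sigma a \rightarrow \Sigma X$ and using $X \in \Sigma\sA$, $\Sigma a \in \Sigma\sA$, and closure of $\Sigma\sA$ under extensions shows $b \in \Sigma\sA$, so $b \in \sB \cap \Sigma\sA = \Sigma\sS$. Hence $X \in \sS * \Sigma\sS$.

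Granting the identity, the forward implication is immediate: if $\sA \subseteq \sA' \subseteq \Sigma\sA$ then $\sB' = (\sA')^{\perp} \subseteq \sA^{\perp} = \sB$, so $\sS' = \sA' \cap \Sigma^{-1}\sB' \subseteq \Sigma\sA \cap \Sigma^{-1}\sB = \sS * \Sigma\sS$, and $\sS'$ is silting by the bijection. For the converse, let $\sS'$ be a silting subcategory with $\sS' \subseteq \sS * \Sigma\sS$ and let $(\sA',\sB')$ be the associated co-t-structure. From $\sS' \subseteq \sS * \Sigma\sS \subseteq \Sigma\sA$ and $\Sigma^{-1}\sA \subseteq \sA \subseteq \Sigma\sA$ we get $\Sigma^{-n}\sS' \subseteq \Sigma\sA$ for every $n \geq 0$, so $\sA' \subseteq \Sigma\sA$ since $\Sigma\sA$ is closed under extensions and direct summands. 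On the other hand, for $n \geq 1$ we have $\Sigma^{n}\sS' \subseteq \Sigma^{n}\sS * \Sigma^{n+1}\sS \subseteq \sB$, using $\Sigma\sS \subseteq \sB$ and $\Sigma\sB \subseteq \sB$; hence $\sB' \subseteq \sB$ since $\sB$ is closed under extensions and direct summands, and therefore $\sA = {}^{\perp}\sB \subseteq {}^{\perp}\sB' = \sA'$. So $\sA \subseteq \sA' \subseteq \Sigma\sA$, and the two assignments, being restrictions of the bijection of \cite[cor.\ 5.9]{MSSS}, are mutually inverse.

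I expect the main obstacle to be the identity $\sS * \Sigma\sS = \Sigma\sA \cap \Sigma^{-1}\sB$, and in particular its ``$\supseteq$'' half, where one must verify that the two terms of a co-t-structure truncation triangle of an object of $\Sigma\sA \cap \Sigma^{-1}\sB$ land exactly in the co-heart $\sS$ and in its suspension $\Sigma\sS$; this is precisely the point where the boundedness of $(\sA,\sB)$ and the shifts built into the notions of co-heart and of ``intermediate'' interact, and it is what makes ``width-one'' co-t-structures match ``two-term'' silting subcategories. Everything else is closure-under-extensions bookkeeping together with the orthogonality relations $\sA = {}^{\perp}\sB$ and $\sB = \sA^{\perp}$.
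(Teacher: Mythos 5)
Your proof is correct and takes essentially the same route as the paper: it reduces everything to the bijection of [MSSS, cor.\ 5.9] (the paper's Theorem \ref{t:co-t-str-and-silting}) and hinges on the identity $\sS * \Sigma\sS = \Sigma\sA \cap \Sigma^{-1}\sB$, which is exactly the paper's Lemma \ref{l:2-co-heart} proved by the same truncation-triangle argument, followed by the same containment bookkeeping in both directions as in the proof of Theorem \ref{t:inter-co-t-str-and-2-term-silting}. Your additional check that an intermediate co-t-structure is automatically bounded (so that [MSSS] applies) is a small but worthwhile detail, since the paper states the body version only for bounded co-t-structures while the introduction's formulation omits that word.
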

The co-t-structures in (i) are called \emph{intermediate}.  The silting
subcategories in (ii) are called \emph{two-term}, motivated by the existence
of a distinguished triangle $s_1 \rightarrow s_0 \rightarrow s'$ with
$s_i \in \sS$ for each $s' \in \sS'$.  The theorem reduces the study
of intermediate co-t-structures to the study of two-term silting
subcategories.

Our main results on two-term silting subcategories and $\tau$-tilting
theory can be summed up as follows. 
We extend the notion of support $\tau$-tilting modules
for finite dimensional algebras over fields given in \cite{AIR} to that
for essentially small additive categories, see Definition
\ref{definition:support-tau-tilting} and \ref{def:support_tau-tilting}.
For a commutative ring $\Bk$, we say that a $\Bk$-linear category
is \emph{Hom-finite} if each Hom-set is a finitely generated $\Bk$-module.

\begin{Theorem}
[=Theorems \ref{silting and support tau tilting} and \ref{thm:main}]
\label{main 2}
Let $\sT$ be a triangulated category with a silting subcategory $\sS$.
Assume that each object of $\sS * \Sigma\sS$ can be written as a
direct sum of indecomposable objects unique up to isomorphism.  Then
there is a bijection between the following sets.
\begin{itemize}

  \item[(i)]  Silting subcategories of $\sT$ which are in $\sS *
    \Sigma \sS$. 

\smallskip

  \item[(ii)] Support $\tau$-tilting pairs of $\mod\, \sS$.

\end{itemize}

If $\sT$ is Krull--Schmidt, $\Bk$-linear and Hom-finite over
a commutative ring $\Bk$,
and $\sS =\add\, s$ for a silting object $s$, then there is a bijection between
the following sets. 

\begin{itemize}

  \item[(iii)]  Basic silting objects of $\sT$ which are in $\sS *
  \Sigma \sS$, modulo isomorphism.

\smallskip

  \item[(iv)]  Basic support $\tau$-tilting modules of $\mod\, E$, modulo
  isomorphism, where $E=\End_{\sT}(s)$.
\end{itemize}
Note that in this case, there is a bijection between {\rm (i)} and
{\rm (iii)} by \cite[prop.\ 2.20 and lem.\ 2.22(a)]{AI}.
\end{Theorem}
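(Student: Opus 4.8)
The plan is to establish the two bijections in Theorem~\ref{main 2} by reducing the first one to a categorical statement about $\mod\,\sS$ and the second one to the first. The key structural input is the theory of two-term silting subcategories of $\sT$ and how they interact with the functor $\sS \to \mod\,\sS$, $s \mapsto \Hom_\sT(-,s)|_\sS$, which identifies $\add\,\sS$ with the projective objects of $\mod\,\sS$. Given a two-term silting subcategory $\sS' \subseteq \sS * \Sigma\sS$, each $s' \in \sS'$ sits in a distinguished triangle $s_1 \to s_0 \to s' \to \Sigma s_1$ with $s_i \in \sS$; applying $\Hom_\sT(-,s')$ and the homological functor machinery produces a finitely presented $\sS$-module $M = \H^0(s')$ (the cokernel of $\Hom_\sT(-,s_0) \to \Hom_\sT(-,s_1)$, up to the appropriate convention) together with the data recording which indecomposable summands of $\sS$ do not appear — this is exactly the ``support'' part of a support $\tau$-tilting pair. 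First I would verify that this assignment $\sS' \mapsto (\,\overline{M},\sP\,)$, where $\sP$ collects the vanishing projectives, lands in support $\tau$-tilting pairs of $\mod\,\sS$ in the sense of Definition~\ref{def:support_tau-tilting}: the silting condition $\Hom_\sT(\sS',\Sigma\sS') = 0$ must be translated into $\Hom_{\mod\,\sS}(\overline{M},\tau\overline{M}) = 0$ plus the relevant Ext-vanishing against the support projectives, and the generation/thickness condition $\thick\,\sS' = \sT$ must be translated into the maximality (correct number of indecomposable summands) built into the definition of support $\tau$-tilting.

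The core of the argument is constructing the inverse map and checking it is well-defined; this I expect to be the main obstacle. Given a support $\tau$-tilting pair $(\overline{M},\sP)$ of $\mod\,\sS$, one lifts a minimal projective presentation $P_1 \to P_0 \to M \to 0$ in $\mod\,\sS$ to a distinguished triangle $s_1 \to s_0 \to s' \to \Sigma s_1$ in $\sT$ with $\add(s_0 \oplus s_1 \oplus (\text{summands indexing }\sP)) = \sS$ suitably arranged, and sets $\sS'$ to be the additive closure of these $s'$ together with $\Sigma$ applied to the projectives killed by the support condition. One then must prove $\sS'$ is silting: the Ext-characterization of $\tau$-rigidity (in the style of the Auslander--Reiten formula / Wakamatsu-type arguments underlying \cite{AIR}) gives $\Hom_\sT(\sS',\Sigma\sS') = 0$, and a counting or ``maximal rigid implies silting in the two-term range'' argument — using the uniqueness of direct-sum decompositions hypothesis on $\sS * \Sigma\sS$ — gives $\thick\,\sS' = \sT$. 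Showing the two constructions are mutually inverse then reduces to the fact that minimal projective presentations are unique up to isomorphism and that minimal triangles lifting them are unique up to (non-unique) isomorphism, so the induced maps on subcategories/isomorphism classes are well-defined and inverse to each other. The hypothesis that objects of $\sS * \Sigma\sS$ decompose uniquely into indecomposables is precisely what makes the passage between ``subcategory'' and ``the indecomposable summands of a single object'' behave well, and it is what lets the counting argument close.

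For the second bijection, between (iii) and (iv), I would specialize: when $\sT$ is Krull--Schmidt, $\Bk$-linear and Hom-finite and $\sS = \add\,s$ for a silting object $s$, the category $\mod\,\sS$ is equivalent to $\mod\,E$ where $E = \End_\sT(s)$, via the standard equivalence sending a finitely presented $\sS$-module to the corresponding $E$-module (here one uses that $s$ is an additive generator of $\sS$, so $\Hom_\sT(-,s)$ induces $\mod\,\sS \simeq \mod\,E$). Under this equivalence, support $\tau$-tilting pairs of $\mod\,\sS$ correspond to support $\tau$-tilting pairs of $\mod\,E$, which by the original definition in \cite{AIR} (reconciled via Definition~\ref{definition:support-tau-tilting}) biject with basic support $\tau$-tilting $E$-modules up to isomorphism. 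On the other side, basic silting objects of $\sT$ in $\sS * \Sigma\sS$ up to isomorphism biject with the silting subcategories in (i) that are contained in $\sS * \Sigma\sS$, because in the Krull--Schmidt setting a silting subcategory is the additive closure of a basic silting object, unique up to isomorphism. Composing these identifications with the bijection (i)$\leftrightarrow$(ii) already established yields (iii)$\leftrightarrow$(iv). The remaining care is purely bookkeeping: matching the "basic" and "up to isomorphism" qualifiers on both sides, and checking the Hom-finiteness is exactly what guarantees $\mod\,E$ (equivalently $\mod\,\sS$) consists of genuinely finite-dimensional-like objects so that the finiteness conditions in the definitions of support $\tau$-tilting match up. The final sentence of the statement, relating (i) and (iii) via \cite[prop.\ 2.20 and lem.\ 2.22(a)]{AI}, is then just the observation that these citations provide the Krull--Schmidt dictionary between silting subcategories and basic silting objects, which is used implicitly above.
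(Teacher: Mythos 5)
Your overall shape (restricted Yoneda functor sending a two-term silting subcategory to a module category datum plus the ``killed'' projectives, inverse by lifting presentations to triangles, then passing to the $E$-picture) agrees with the paper, but two steps you wave at would not go through as described. First, in the generality of (i)--(ii) there is no Hom-finiteness and $\sS$ may have infinitely many indecomposables, so your proposed proof that the lifted subcategory is \emph{silting} --- ``a counting or maximal-rigid-implies-silting argument'' --- is unavailable: Bongartz completion and the criterion $\#_{\sT}(u)=\#_{\sT}(s)$ are only proved (and only make sense) in the Hom-finite Krull--Schmidt silting-object setting (Lemma~\ref{lem:Bongartz}, Proposition~\ref{pro:silting_criterion}). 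What the paper does instead is use the approximation sequences in Definition~\ref{definition:support-tau-tilting}(iv): for each $s\in\sS$ it lifts the left $F(\sU)$-approximation to a triangle $s\to u_s^0\to t_s\to\Sigma s$, adjoins the cones $t_s$, checks the enlarged category is still presilting with the same image under $\Phi$, and gets $\thick\supseteq\sS$ for free from these triangles (Theorem~\ref{thm:main-general}(d)). Your sketch has no substitute for this step; likewise your translation of rigidity through $\tau$ and the Auslander--Reiten formula presupposes AR theory that a general $\mod\,\sS$ does not have --- the paper works directly with Property (S) on projective presentations (Lemma~\ref{lem:S-general}). Also, your description of the target of the forward map as a ``maximality / correct number of summands'' condition does not match Definition~\ref{definition:support-tau-tilting}: what actually has to be verified is $\sE=\Ker(\sM)$ (the paper uses \cite[thm.\ 2.18]{AI}) and the existence of the left $\sM$-approximation sequences (the paper gets $\sS\subseteq(\Sigma^{-1}\sU)*\sU$ from the co-t-structure correspondence, Corollary~\ref{c:shifting-2-term}).

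Second, the passage (ii)$\leftrightarrow$(iv) is not ``purely bookkeeping''. The ring-level notion (Definition~\ref{def:support_tau-tilting}) asks for an idempotent $e\in E$ with $Ue=0$ and $\#_{\mod\,E}(U)=\#_{\prj\,(E/EeE)}(E/EeE)$, and here $E$ is only Hom-finite over a commutative ring, so you cannot invoke the results or definitions of \cite{AIR}, which are for finite-dimensional algebras over a field. Reconciling this counting condition with the pair-level definition in $\mod\,\sS$ is exactly the content of Theorem~\ref{thm:main}, whose proof needs Bongartz completion, the silting criterion $\#_{\sT}(u)=\#_{\sT}(s)$ via \cite[thm.\ 2.27]{AI}, and the chain of counting identifications through $(\sS*\Sigma\sS)/[\Sigma\sS]\simeq\mod\,E$ and $(\prj\,E)/[\add\,eE]\simeq\prj\,(E/EeE)$. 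Similarly, your claim that every silting subcategory contained in $\sS*\Sigma\sS$ is $\add$ of a single basic object is not automatic; it is Corollary~\ref{cor:presilting-cat-has-generator}, which again rests on Bongartz completion and the counting criterion. So the second bijection requires genuine arguments you have not supplied.
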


Note that Theorem \ref{main 2} is a much stronger version
of \cite[thm.\ 3.2]{AIR}, where $\sT$ is assumed to be the homotopy
category of bounded complexes of finitely generated projective
modules over a finite dimensional algebra $\Lambda$ over a field and
$s$ is assumed to be $\Lambda$.

Moreover, we give the following link between $\tau$-tilting theory and
torsion classes. Our main result shows that
support $\tau$-tilting pairs correspond
bijectively with certain finitely generated torsion classes,
which is a stronger version of \cite[thm.\ 2.7]{AIR}.
Note that $\Fac\,\sM$ is the subcategory of
$\Mod\,\sC$ consisting of factor objects of finite direct sums of
objects of $\sM$, and $\sP( \sT )$ denotes the $\Ext$-projective
objects of $\sT$, see Definition \ref{def:tors}.

\begin{Theorem}
[=Theorem \ref{tau tilting and torsion class 2}]
\label{tau tilting and torsion class}
Let $\Bk$ be a commutative noetherian local ring,
$\sC$ an essentially small, Krull-Schmidt, $\Bk$-linear Hom-finite category.  There is  
a bijection $\sM \mapsto \Fac\,\sM$ from the first to the second of
the following sets.
\begin{itemize}
  \item[(i)] Support $\tau$-tilting pairs $(\sM,\sE)$ of $\mod\,\sC$.

\smallskip

  \item[(ii)] Finitely generated torsion classes $\sT$ of $\Mod\,\sC$
    such that each finitely generated projective $\sC$-module has a
    left $\sP(\sT)$-approximation. 

\end{itemize}
\end{Theorem}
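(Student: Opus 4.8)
The plan is to construct the map $\sM \mapsto \Fac\,\sM$ and an inverse $\sT \mapsto \sP(\sT)$, then check that each is well-defined and that they are mutually inverse. First I would verify that the assignment is well-defined: given a support $\tau$-tilting pair $(\sM,\sE)$ of $\mod\,\sC$, one must show that $\Fac\,\sM$ is a torsion class of $\Mod\,\sC$, that it is finitely generated (i.e.\ generated by a single object up to the usual sense, which here should mean $\Fac\,\sM = \Fac\,\sM_0$ for $\sM_0$ coming from a finite subcategory, or that $\Fac\,\sM \cap \mod\,\sC$ is functorially finite), and that every finitely generated projective $\sC$-module $P$ admits a left $\sP(\Fac\,\sM)$-approximation. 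The key identification here is $\sP(\Fac\,\sM) = \add\,\sM$: the $\tau$-tilting (really $\tau$-rigid) condition on $\sM$ forces the objects of $\sM$ to be $\Ext$-projective in $\Fac\,\sM$, and the "support" and maximality part of the pair condition forces the converse inclusion. Given that, the approximation property of $P$ follows by taking the minimal right $\add\,\sM$-approximation of $P$ and completing, exactly as in the classical Bongartz-type argument in \cite{AIR}.

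Next I would construct the inverse. Given a finitely generated torsion class $\sT$ as in (ii), set $\sM := \sP(\sT)$, the $\Ext$-projective objects of $\sT$, and let $\sE$ be the subcategory of projectives $P$ with $\Hom_\sC(P,-)|_\sT = 0$ — equivalently the projectives killed by the torsion radical associated to $\sT$. One checks $(\sM,\sE)$ is a support $\tau$-tilting pair: $\tau$-rigidity of $\sM$ is the statement that $\Ext$-projectives in a torsion class are $\tau$-rigid (the standard translation via the Auslander--Reiten formula, adapted to $\mod\,\sC$), and the fact that $(\sM,\sE)$ is \emph{support $\tau$-tilting} rather than merely $\tau$-rigid uses precisely the hypothesis that each finitely generated projective has a left $\sP(\sT)$-approximation — this provides, for each projective, the triangle/exact sequence witnessing the completeness (the analogue of "$|\sM| + |\sE| = |\sC|$" in the field case). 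Then $\Fac\,\sP(\sT) = \sT$ needs to be shown; the inclusion $\subseteq$ is clear since $\sT$ is closed under factors, and $\supseteq$ uses the left-approximation hypothesis to write every object of $\sT \cap \mod\,\sC$ as a factor of an object of $\add\,\sP(\sT)$, after which one passes to arbitrary objects of $\sT$ using finite generation.

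For the composite in the other direction, starting from $(\sM,\sE)$ one must check $\sP(\Fac\,\sM) = \add\,\sM$ (done above) and that the recovered $\sE$ agrees with the original; the latter is where the pair structure, not just $\sM$, is genuinely used, and it reduces to the statement that the projectives vanishing on $\Fac\,\sM$ are exactly those in $\sE$, which is part of the definition of support $\tau$-tilting pair. I expect the main obstacle to be the bookkeeping around "finitely generated" for torsion classes in $\Mod\,\sC$ when $\sC$ is only essentially small Krull--Schmidt Hom-finite over a commutative noetherian local ring $\Bk$: one has to be careful that $\Fac\,\sM$ really lies in the class of torsion classes described in (ii), that the $\Ext$-projectives behave well (idempotents split, minimal approximations exist), and that all the approximation arguments that are transparent for finite-dimensional algebras over a field survive in this relative setting. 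Much of this should follow from the Krull--Schmidt and Hom-finiteness assumptions together with the earlier development of $\mod\,\sC$ in the paper (Definitions \ref{definition:support-tau-tilting}, \ref{def:support_tau-tilting}, \ref{def:tors}), so the proof will be a careful adaptation of \cite[thm.\ 2.7]{AIR} rather than a new argument.
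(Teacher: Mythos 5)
Your overall architecture is the same as the paper's: send $(\sM,\sE)$ to $\Fac\,\sM$, recover $\sM$ as $\sP(\Fac\,\sM)$, and invert via $\sT\mapsto(\sP(\sT),\bigcap_{m\in\sT}\Ker m)$, adapting \cite[thm.\ 2.7]{AIR}. However, there is a genuine gap at the one step that carries the real technical weight. You justify $\tau$-rigidity of $\sP(\sT)$ by ``the standard translation via the Auslander--Reiten formula, adapted to $\mod\,\sC$''. In this setting $\tau$-rigidity is \emph{defined} via Property (S) on projective presentations, and $\sC$ is only Hom-finite over a commutative noetherian local ring $\Bk$: there is no Auslander--Reiten translate, no AR/Serre duality, and hence no formula to translate through. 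The implication $\Ext^1_{\Mod\,\sC}(\sM,\Fac\,\sM)=0\Rightarrow\sM$ $\tau$-rigid is exactly what must be proved from scratch, and the paper does so in Lemma \ref{tau-rigidity}: from $\Ext$-vanishing one first gets the factorisation condition $f=ad_1+fd_2b$ through a minimal presentation, and then an iteration shows $\Hom(P_1,m')=\Hom(P_0,m')d_1+\Hom(P_1,m')(\rad\End_{\Mod\,\sC}(P_1))^n$ for all $n$, which is finished using Hom-finiteness and Krull's intersection theorem --- this is precisely where the noetherian local hypothesis on $\Bk$ enters. Your sketch offers no substitute for this argument, and the field-case route you name does not survive the generalisation.

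Two further steps are asserted where the paper has to work: (1) the inclusion $\sP(\Fac\,\sM)\subseteq\sM$, which you attribute to ``the support and maximality part of the pair condition''; in the paper this is Proposition \ref{recover}(ii), proved by combining the defining sequences $\sC(-,s)\to m_i\to m_i'\to 0$ with the presentation of an Ext-projective $n$, passing to $\sC/\ann\sM$, taking two mapping cones, cancelling a contractible summand and then splitting --- not a formal consequence of the definition; (2) in the inverse direction, being support $\tau$-tilting requires not just a left $\sP(\sT)$-approximation $f\colon\sC(-,s)\to m$ (which is the hypothesis in (ii)) but also $\Coker f\in\sM$, i.e.\ Ext-projectivity of the cokernel, which the paper checks by showing $\Hom_{\Mod\,\sC}(m,m')\to\Hom_{\Mod\,\sC}(\Image f,m')$ is surjective; your proposal does not address this. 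A minor point in the same vein: for well-definedness you invoke a ``minimal right $\add\,\sM$-approximation of $P$'' and a Bongartz-type completion, but once $\sP(\Fac\,\sM)=\sM$ is known, the required left $\sP(\Fac\,\sM)$-approximation of each $\sC(-,s)$ is literally part of the definition of a support $\tau$-tilting pair, so no such argument is needed. In short, the skeleton is right, but the proposal omits the arguments that constitute the proof, and the one concrete mechanism it does name (the AR formula) is unavailable at this level of generality.
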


\section{Basic definitions}

Let $\sC$ be an additive category.  When we say that $\sU$ is a
\emph{subcategory} of $\sC$, we always assume $\sU$ is full and closed
under finite direct sums and direct summands.  For a collection $\sU$ of
objects of $\sC$, we denote by $\add\, \sU$ the smallest subcategory
of $\sC$ containing $\sU$.

Let $\sC$ be an essentially small additive category.
We write $\Mod\, \sC$ for the abelian category of contravariant
additive functors from $\sC$ to the category of abelian groups and
$\mod\, \sC$ for the full subcategory of finitely presented functors,
see \cite[pp.\ 184 and 204]{A}.

The suspension functor of a triangulated category is denoted by
$\Sigma$. 

We first recall the notions of co-t-structures and silting subcategories.

\begin{Definition}\label{d:co-t-str}
Let $\sT$ be a triangulated category.
A \emph{co-$t$-structure} on
$\sT$  is a pair $(\sA,\sB)$ of full  subcategories of $\sT$ such that
\begin{itemize}
\item[(i)] $\Sigma^{-1}\sA\subseteq\sA$ and
$\Sigma\sB\subseteq\sB$;

\smallskip

\item[(ii)] $\Hom_{\sT}(a,b)=0$ for $a\in\sA$
and $b\in\sB$,

\smallskip

\item[(iii)] for each $t\in\sT$ there is a triangle $a\rightarrow
t\rightarrow b\rightarrow\Sigma a$ in $\sT$ with $a\in\sA$ and $b\in\sB$.
\end{itemize}
The \emph{co-heart} is defined as the intersection $\sA~\cap~\Sigma^{-1}\sB$. See \cite{P,Bondarko10}.
\end{Definition}

\begin{Definition}
Let $\sT$ be a triangulated category.
\begin{itemize}
\item[(i)] A subcategory $\sU$ of $\sT$ is called a
\emph{presilting subcategory} if $\sT(u,\Sigma^{\geq 1}u')=0$ holds
for any $u,u'\in\sU$.
\smallskip

\item[(ii)] A presilting subcategory $\sS \subseteq \sT$ is a \emph{silting
  subcategory} if $\thick( \sS ) = \sT$, see \cite[def.\ 2.1(a)]{AI}.
  Here $\thick( \sS )$ denotes the
  smallest thick subcategory of $\sT$ containing $\sS$.
\smallskip

\item[(iii)] An object $u \in \sT$ is called a \emph{presilting object}
if it satisfies $\sT( u,\Sigma^{ \geq 1 }u ) = 0$, namely, if $\add(u)$
is a presilting subcategory. Similarly an object $u \in \sT$ is called
a \emph{silting object} if $\add(u)$ is a silting subcategory.
\end{itemize}
\end{Definition}

Next we introduce the notion of support $\tau$-tilting subcategories.

\begin{Definition}
\label{definition:support-tau-tilting}
Let $\sC$ be an essentially small additive category.
\begin{itemize}

\item[(i)] Let $\sM$ be a subcategory of $\mod\,\sC$.  A class $\{\,
  P_1 \stackrel{ \pi^m }{ \rightarrow } P_0 \rightarrow m \rightarrow
  0 \,\mid\, m \in \sM \,\}$ of projective presentations in $\mod\,
  \sC$ is said to have \emph{Property (S)} if
\[
  \Hom_{ \mod\,\sC }( \pi^m , m' )
  : \Hom_{ \mod\,\sC }( P_0 , m' ) 
    \rightarrow \Hom_{ \mod\, \sC }( P_1 , m' )
\]
is surjective for any $m , m' \in \sM$.

\smallskip

\item[(ii)] A subcategory $\sM$ of $\mod\, \sC$ is said to be
  \emph{$\tau$-rigid} if there is a class of projective presentations
  $\{P_1 \rightarrow P_0 \rightarrow m \rightarrow 0\mid m\in\sM\}$
  which has Property (S).

\smallskip

\item[(iii)] A \emph{$\tau$-rigid pair} of $\mod\, \sC$ is a pair $(
  \sM , \sE )$, where $\sM$ is a $\tau$-rigid subcategory of $\mod\,
  \sC$ and $\sE \subseteq \sC$ is a subcategory with $\sM( \sE )=0$,
  that is, $m( e )=0$ for each $m \in \sM$ and $e \in \sE$.

\smallskip

\item[(iv)] A $\tau$-rigid pair $( \sM , \sE )$ is \emph{support
  $\tau$-tilting} if $\sE = \Ker( \sM )$ and for each $s \in \sC$
  there exists an exact sequence $\sC( - , s )
  \stackrel{f}{\rightarrow} m^0 \rightarrow m^1 \rightarrow 0$ with
  $m^0, m^1 \in \sM$ such that $f$ is a left $\sM$-approximation.

\end{itemize}
\end{Definition}

It is useful to recall the notion of Krull--Schmidt categories.

\begin{Definition}
\label{def:hash}
An additive category $\sC$ is called \emph{Krull--Schmidt} if each
of its objects is the direct sum of finitely many objects with local
endomorphism rings.  It follows that these finitely many objects are
indecomposable and determined up to isomorphism, see \cite[thm.\
I.3.6]{B}.  It also follows that $\sC$ is \emph{idempotent complete},
that is, for an object $c$ of $\sC$ and an idempotent $e\in\sC(c,c)$,
there exist objects $c_1$ and $c_2$ such that $c=c_1\oplus c_2$ and
$e=\mathrm{id}_{c_1}$, see \cite[5.1]{K}.
\begin{itemize}

\item[(i)] For $c \in \sC$ to be \emph{basic} means that it has no
  repeated indecomposable direct summands.

\smallskip

\item[(ii)] For an object $c \in \sC$, let $\#_{ \sC }( c )$ denote
  the number of pairwise non-isomorphic indecomposable direct summands of $c$.

\end{itemize}
\end{Definition}

The following is a ring version of Definition \ref{definition:support-tau-tilting}.

\begin{Definition}
\label{def:support_tau-tilting}
Let $E$ be a ring such that $\mod\, E$ is Krull--Schmidt.
\begin{itemize}

\item[(i)] A module $U \in \mod\, E$ is called \emph{$\tau$-rigid} if
  there is a projective presentation $P_1
  \stackrel{\pi}{\rightarrow} P_0 \rightarrow U \rightarrow 0$ in
  $\mod\, E$ such that $\Hom_E(\pi,U)$ is surjective.

\smallskip

\item[(ii)] A $\tau$-rigid module $U \in \mod\, E$
  is called \emph{support $\tau$-tilting} if
  there is an idempotent $e \in E$ which satisfies that $Ue
  = 0$ and $\#_{ \mod\, E }( U ) = \#_{ \prj\, (E / EeE) }( E / EeE )$.

\end{itemize}
\end{Definition}

\begin{Remark}
Part (ii) of the definition makes sense because $\prj\, (E / EeE)$ is
Krull--Schmidt.  Namely, since $\mod\, E$ is Krull--Schmidt, it follows
that $\prj\, E$ is Krull--Schmidt with additive generator $E_E$.  The
same is hence true for $(\prj\, E)/[\add\, eE]$ for each idempotent $e
\in E$, and it is not hard to check that the endomorphism ring of
$E_E$ in $(\prj\, E)/[\add\, eE]$ is $E / EeE$, so there is an
equivalence of categories
\[
  (\prj\, E)/[\add\, eE] \stackrel{ \sim }{ \rightarrow }
  \prj\, (E / EeE).
\]
Hence $\prj\, (E / EeE)$ is Krull--Schmidt.

If $E$ is a finite-dimensional algebra over a field, then the
definition coincides with the original definition of basic support
$\tau$-tilting modules by Adachi, Iyama and Reiten \cite[def.\
0.1(c)]{AIR}.
\end{Remark}

Finally we introduce the notion of torsion classes.

\begin{Definition}
\label{def:tors}
Let $\sC$ be an essentially small additive category, $\sT$ a full subcategory of
$\Mod\, \sC$. 
\begin{itemize}

\item[(i)] We say that $\sT$ is a \emph{torsion class} if it is closed
  under factor modules and extensions.

\smallskip

\item[(ii)] For a subcategory $\sM$ of $\Mod\,\sC$, we denote by
  $\Fac\,\sM$ the subcategory of $\Mod\,\sC$ consisting of factor
  objects of objects of $\sM$.

\smallskip

\item[(iii)] We say that a torsion class $\sT$ is \emph{finitely generated}
  if there exists a full subcategory $\sM$ of $\mod\,\sC$
  such that $\sT=\Fac\,\sM$. Clearly the objects in
  $\Fac\,\sM$ are  finitely generated $\sC$-modules, which are not necessarily finitely
  presented.

\smallskip

\item[(iv)] An object $t$ of a torsion class $\sT$ is
  \emph{Ext-projective} if $\Ext^1_{\Mod\,\sC}(t,\sT)=0$.  We denote
  by $\sP(\sT)$ the full subcategory of $\sT$ consisting of all
  Ext-projective objects of $\sT$.

\end{itemize}
\end{Definition}

\section{Silting subcategories and co-t-structures}
In this section, $\sT$ is an essentially small, idempotent complete
triangulated category.

Let $(\sA,\sB)$ be a co-t-structure on $\sT$. It follows from the
definition that  
\begin{eqnarray*}
\sA&=&\{t\in\sT~|~\Hom(t,b)=0~\forall b\in\sB\},\\
\sB&=&\{t\in\sT~|~\Hom(a,t)=0~\forall a\in\sA\}.
\end{eqnarray*} 
In particular, both $\sA$ and $\sB$ are idempotent complete and extension closed. Hence so is the co-heart $
\sS=\sA\cap\Sigma^{-1}\sB$. Set
\[
\sS*\Sigma\sS=\{t\in\sT~|~\text{there is a triangle } s_1\rightarrow
s_0\rightarrow t\rightarrow \Sigma s_1~\text{with } s_0, s_1 \in \sS \}\subseteq \sT.
\]

The following lemma will often be used without further remark.
\begin{Lemma}\label{l:2-co-heart} There is an equality
$\sS*\Sigma\sS=\Sigma\sA\cap\Sigma^{-1}\sB$. As a consequence, $\sS*\Sigma\sS$ is
idempotent complete and extension closed.
\end{Lemma}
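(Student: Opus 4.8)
The plan is to prove the equality $\sS*\Sigma\sS = \Sigma\sA \cap \Sigma^{-1}\sB$ by double inclusion, then deduce the "consequence" from Lemma~\ref{l:2-co-heart}'s own statement applied to a standard fact about intervals in a co-t-structure.

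\textbf{Inclusion $\subseteq$.} First I would take $t \in \sS*\Sigma\sS$, so there is a triangle $s_1 \to s_0 \to t \to \Sigma s_1$ with $s_0, s_1 \in \sS = \sA \cap \Sigma^{-1}\sB$. Since $s_0 \in \sA$ and $\Sigma\sA \subseteq \sA$ is equivalent to $\sA \subseteq \Sigma^{-1}\sA$ — wait, the axiom is $\Sigma^{-1}\sA \subseteq \sA$, so $\sA \subseteq \Sigma\sA$; hence $s_0 \in \sA \subseteq \Sigma\sA$ and $s_1 \in \sA \subseteq \Sigma\sA$, so $\Sigma s_1 \in \Sigma^2\sA \subseteq \Sigma\sA$ because $\Sigma\sA \subseteq \Sigma\sA$... more carefully: $s_1 \in \sA$ gives $\Sigma s_1 \in \Sigma\sA$, and I want $t \in \Sigma\sA$. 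The triangle $s_0 \to t \to \Sigma s_1 \to \Sigma s_0$ exhibits $t$ as an extension of $\Sigma s_1 \in \Sigma\sA$ by $s_0 \in \sA \subseteq \Sigma\sA$ (using $\sA \subseteq \Sigma\sA$), and $\Sigma\sA$ is extension closed (it is one half of the shifted co-t-structure $(\Sigma\sA, \Sigma\sB)$, or directly: $\sA$ is extension closed as noted just before the lemma, hence so is $\Sigma\sA$). Therefore $t \in \Sigma\sA$. Dually, $s_1, s_0 \in \Sigma^{-1}\sB$, and $\Sigma^{-1}\sB \subseteq \sB$ gives $\Sigma^{-1}\sB$ is "large"; from the triangle $s_1 \to s_0 \to t \to \Sigma s_1$ one sees $t$ is an extension of $\Sigma s_1 \in \Sigma\cdot\Sigma^{-1}\sB = \sB \subseteq$ ... hmm, I need $t \in \Sigma^{-1}\sB$, i.e. $\Sigma t \in \sB$. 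Apply $\Sigma$ to the triangle: $\Sigma s_1 \to \Sigma s_0 \to \Sigma t \to \Sigma^2 s_1$, with $\Sigma s_0 \in \sB$ and $\Sigma^2 s_1 \in \sB$ (since $s_1 \in \Sigma^{-1}\sB$ means $\Sigma s_1 \in \sB$, and $\Sigma\sB \subseteq \sB$ gives $\Sigma^2 s_1 \in \sB$); as $\sB$ is extension closed, $\Sigma t \in \sB$, so $t \in \Sigma^{-1}\sB$. Hence $t \in \Sigma\sA \cap \Sigma^{-1}\sB$.

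\textbf{Inclusion $\supseteq$.} Conversely, take $t \in \Sigma\sA \cap \Sigma^{-1}\sB$. Apply the co-t-structure truncation triangle of Definition~\ref{d:co-t-str}(iii) to $\Sigma^{-1}t$: there is a triangle $a \to \Sigma^{-1}t \to b \to \Sigma a$ with $a \in \sA$, $b \in \sB$. Rotating and applying $\Sigma$ gives a triangle $\Sigma a \to t \to \Sigma b \to \Sigma^2 a$. Set $s_1 := \Sigma a$ and $s_0 := ?$ — this isn't quite in the right form yet; instead I would argue that $a \in \sA$ and also $a \in \Sigma^{-1}\sB$: indeed $\Sigma a$ sits in the triangle $\Sigma a \to t \to \Sigma b$, with $t \in \Sigma^{-1}\sB$ so $\Sigma t \in \sB$, and $\Sigma^2 b \in \sB$, hence $\Sigma^2 a \in \sB$ (extension closure of $\sB$ applied to $\Sigma(\Sigma a \to t \to \Sigma b)$, i.e. $\Sigma^2 a \to \Sigma t \to \Sigma^2 b$), so $\Sigma a \in \Sigma^{-1}\sB$, i.e. $a \in \Sigma^{-2}\sB \subseteq \Sigma^{-1}\sB$. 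Thus $a \in \sA \cap \Sigma^{-1}\sB = \sS$. Similarly, from the same triangle and $t \in \Sigma\sA$ (so $\Sigma^{-1}t \in \sA$), together with $a \in \sA$, the triangle $a \to \Sigma^{-1}t \to b \to \Sigma a$ forces $b \in \sA$ as well (octahedral/extension argument: $b$ fits in $\Sigma^{-1}t \to b \to \Sigma a$ with both ends in $\sA$, using $\Sigma a \in \sA$), and $b \in \sB$, so $b \in \sA \cap \sB$. But actually I want $\Sigma b \in \sS$: $\Sigma b \in \Sigma\sB$ and... let me instead just use the triangle $\Sigma a \to t \to \Sigma b \to \Sigma^2 a$ directly with $s_1 := \Sigma a$, noting I still need $\Sigma b \in \sS$ so that it reads $s_1 \to s_0 \to t$ — it doesn't, the shape is wrong. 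The correct move: rotate $\Sigma a \to t \to \Sigma b \to \Sigma^2 a$ to $t \to \Sigma b \to \Sigma^2 a \to \Sigma t$; not helpful. Better: take the \emph{other} truncation — use that $(\Sigma\sA,\Sigma\sB)$ and $(\sA,\sB)$ give, for $t$, a triangle $a' \to t \to b' \to \Sigma a'$ with $a' \in \Sigma\sA$, $b' \in \Sigma\sB$, and since $t \in \Sigma^{-1}\sB \subseteq \sB \subseteq$ ... The cleanest approach is to cite the standard fact (e.g.\ from \cite{Bondarko10} or \cite{P}) that for $t \in \Sigma\sA \cap \Sigma^{-1}\sB$ the two truncation triangles for the co-t-structures $(\sA,\sB)$ and $(\Sigma\sA,\Sigma\sB)$ can be glued, via the octahedral axiom, into a triangle $s_1 \to s_0 \to t \to \Sigma s_1$ with $s_0 \in \sA \cap \Sigma^{-1}\sB = \sS$ and $s_1 \in \sA\cap\Sigma^{-1}\sB = \sS$; I would spell this out by taking the truncation $a_0 \to t \to b_0 \to \Sigma a_0$ with $a_0 \in \sA$, $b_0 \in \sB$, showing $b_0 \in \Sigma\sA$ and $a_0 \in \Sigma^{-1}\sB$ from the membership of $t$, setting $s_0 := a_0$, and letting $s_1 := \Sigma^{-1}b_0$, which lies in $\sS$ by the same computations; the triangle $\Sigma^{-1}b_0 \to a_0 \to t \to b_0$ is then of the required form.

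\textbf{The consequence.} Once the equality $\sS*\Sigma\sS = \Sigma\sA \cap \Sigma^{-1}\sB$ is established, idempotent completeness and extension closure of $\sS*\Sigma\sS$ follow immediately: $\Sigma\sA$ and $\Sigma^{-1}\sB$ are idempotent complete and extension closed (being shifts of $\sA$ and $\sB$, which have these properties as recorded in the paragraph preceding the lemma, since they are the "Hom-orthogonal" classes of the co-t-structure), and a finite intersection of idempotent-complete, extension-closed subcategories is again idempotent complete and extension closed. I would state this in one or two lines.

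\textbf{Main obstacle.} The routine double-inclusion bookkeeping with $\Sigma^{\pm 1}$ is easy but error-prone; the one genuinely non-trivial step is the $\supseteq$ direction, where I must produce an \emph{explicit} triangle of the exact shape $s_1 \to s_0 \to t \to \Sigma s_1$ with both $s_i$ in the co-heart, which requires the octahedral axiom to glue/identify the pieces of the two truncation triangles and a careful verification that the resulting objects land in $\sA \cap \Sigma^{-1}\sB$. This is where I would be most careful.
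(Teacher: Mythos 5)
Your final sketch --- take the truncation triangle $a_0 \to t \to b_0 \to \Sigma a_0$ of $t$ itself, check that $a_0$ and $\Sigma^{-1}b_0$ lie in $\sS$, and rotate to $\Sigma^{-1}b_0 \to a_0 \to t \to b_0$ --- is exactly the paper's proof, and your $\subseteq$ direction and the deduction of the consequence also agree with the paper (which simply observes that $\sS$ and $\Sigma\sS$ lie in $\Sigma\sA\cap\Sigma^{-1}\sB$, which is extension closed, being an intersection of extension closed, idempotent complete subcategories). Two corrections are needed, however. First, no octahedral gluing of two truncation triangles is required: the single truncation triangle of $t$ suffices, so the step you flag as the ``genuinely non-trivial'' one is not where any difficulty lies. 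Second, the membership verifications you defer to ``the same computations'' point back to your abandoned attempt on $\Sigma^{-1}t$, and there extension closure is misapplied: from the triangle $\Sigma^2 a \to \Sigma t \to \Sigma^2 b$ with $\Sigma t,\ \Sigma^2 b \in \sB$ you conclude $\Sigma^2 a \in \sB$, but extension closure only gives membership of the \emph{middle} term of a triangle whose two outer terms lie in the subcategory; $\sB$ is not closed under taking such cocones in general (the same slip occurs in the parenthetical claim that $\Sigma a\in\sA$ forces $b\in\sA$). The correct two lines are: rotating $a_0 \to t \to b_0 \to \Sigma a_0$ gives $t \to b_0 \to \Sigma a_0 \to \Sigma t$, whose outer terms $t$ and $\Sigma a_0$ lie in $\Sigma\sA$, so $b_0 \in \Sigma\sA\cap\sB = \Sigma\sS$; rotating once more gives $b_0 \to \Sigma a_0 \to \Sigma t \to \Sigma b_0$, whose outer terms $b_0$ and $\Sigma t$ lie in $\sB$, so $\Sigma a_0 \in \sB$, i.e.\ $a_0 \in \sA\cap\Sigma^{-1}\sB = \sS$. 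With these verifications inserted, your argument is complete and coincides with the paper's.
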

\begin{proof} The inclusion
$\sS*\Sigma\sS\subseteq\Sigma\sA\cap\Sigma^{-1}\sB$ is clear, because both $\sS$ and $\Sigma\sS$ are contained in $\Sigma\sA\cap\Sigma^{-1}\sB$, which is extension closed. Next we show the opposite inclusion. Let $t\in\Sigma\sA\cap\Sigma^{-1}\sB$. Then by Definition~\ref{d:co-t-str}(iii) there is a triangle $a\rightarrow t\rightarrow b\rightarrow\Sigma a$ with $a\in\sA$ and $b\in\sB$. Since both $t$ and $\Sigma a$ belong to $\Sigma\sA$, so is $b$ due to the fact that $\sA$ is extension closed. Thus $b\in\Sigma\sA\cap\sB=\Sigma\sS$. Similarly, one shows that $a\in\sS$. Thus we obtain a triangle $\Sigma^{-1}b\rightarrow a\rightarrow t\rightarrow b$ with $\Sigma^{-1}b$ and $a$ in $\sS$, meaning that $t\in\sS*\Sigma\sS$.
\end{proof}

It is easy to see that $\Hom(s,\Sigma^{\geq 1}s')=0$  for any
$s,s'\in\sS$. That is, $\sS$ is a presilting subcategory of $\sT$. The co-$t$-structure $(\sA,\sB)$ is said to be \emph{bounded} if
\[
\bigcup_{n\in\mathbb{Z}}\Sigma^n \sB=\sT=\bigcup_{n\in\mathbb{Z}}\Sigma^n\sA.
\]

\begin{Theorem}\emph{(\cite[cor. 5.9]{MSSS})} \label{t:co-t-str-and-silting}
There is a bijection $(\sA,\sB)\mapsto \sA\cap\Sigma^{-1}\sB$ from the first to the second of the following sets.
\begin{itemize}

\item[(i)] Bounded co-t-structures on $\sT$.

\smallskip

\item[(ii)] Silting subcategories of $\sT$.
\end{itemize}
\end{Theorem}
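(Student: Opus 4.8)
The plan is to prove Theorem~\ref{t:co-t-str-and-silting} by constructing the inverse map explicitly and showing the two constructions are mutually inverse. Given a bounded co-t-structure $(\sA,\sB)$, the assignment to its co-heart $\sS = \sA \cap \Sigma^{-1}\sB$ is already defined; I first check that $\sS$ is indeed a silting subcategory. The presilting condition $\Hom(\sS,\Sigma^{\geq 1}\sS)=0$ is immediate from $\sS \subseteq \sA$, $\Sigma^{\geq 1}\sS \subseteq \Sigma^{\geq 1}\Sigma^{-1}\sB = \Sigma^{\geq 0}\sB \subseteq \sB$, and the orthogonality in Definition~\ref{d:co-t-str}(ii). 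For $\thick(\sS) = \sT$ I would use boundedness: iterating the truncation triangles of Definition~\ref{d:co-t-str}(iii) together with $\bigcup_n \Sigma^n\sA = \sT = \bigcup_n \Sigma^n\sB$ lets one write any object of $\sT$ as a finite extension of shifts of objects lying in both $\sA$ and $\sB$ up to bounded shift; a careful d\'evissage, shifting the truncation filtration and using that $\sA,\sB$ are closed under the appropriate one-sided shifts, places every object in the thick closure of $\sS$. This is essentially the argument of \cite[cor.\ 5.9]{MSSS} and I would cite it rather than reproduce all the bookkeeping.

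Next I construct the reverse map: given a silting subcategory $\sS$, set
\[
\sA = \bigcup_{n \geq 0}\, \sS * \Sigma\sS * \cdots * \Sigma^n\sS
\quad\text{(closed under summands)},
\qquad
\sB = \{\, t \in \sT \mid \Hom(\sA,t) = 0 \,\},
\]
or equivalently describe $\sA$ as the smallest subcategory containing $\sS$, closed under extensions, summands, and positive shifts. I then verify $(\sA,\sB)$ is a co-t-structure: axiom (i) is built in; axiom (ii) holds since $\Hom(\sS,\Sigma^{\geq 0}\sB')$-type vanishing propagates through extensions and shifts from the presilting property; axiom (iii), the existence of truncation triangles, is the substantive point and follows by an octahedral induction, approximating an arbitrary $t$ by objects of $\add(\bigoplus_{i=0}^{n}\Sigma^i\sS)$ and using that $\thick(\sS)=\sT$ together with boundedness of the resulting filtration. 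One also checks $(\sA,\sB)$ is bounded, which is immediate from the construction of $\sA$ and the silting hypothesis.

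Finally I show the two maps are mutually inverse. Starting from a bounded co-t-structure $(\sA,\sB)$ with co-heart $\sS$, I must recover $\sA$ as $\bigcup_n \sS*\cdots*\Sigma^n\sS$: the inclusion $\supseteq$ is clear since $\sS \subseteq \sA$ and $\sA$ is closed under extensions and positive shifts; the inclusion $\subseteq$ uses boundedness and iterated truncation to filter any $a \in \sA$ by shifts of the co-heart, with the filtration truncating because $(\sA,\sB)$ is bounded. Conversely, starting from a silting subcategory $\sS$, I must show the co-heart of the constructed $(\sA,\sB)$ is again $\sS$; here Lemma~\ref{l:2-co-heart} (or rather its one-step analogue) gives $\sA \cap \Sigma^{-1}\sB = \sS$ once one knows $\Sigma^{-1}\sB = \{t \mid \Hom(t,\Sigma^{\geq 1}\sA)=0\}$-type descriptions, and the presilting property pins the intersection down to exactly $\add(\sS)=\sS$.

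The main obstacle is the verification of the third co-t-structure axiom for the constructed pair $(\sA,\sB)$ — producing the truncation triangle $a \to t \to b$ for an \emph{arbitrary} $t \in \sT = \thick(\sS)$, not merely for $t$ in the additive or extension closure of shifts of $\sS$. This requires an induction over the ``thick generation length'' of $t$ handling cones and summands, and the delicate point is that summands of objects in $\sS*\cdots*\Sigma^n\sS$ need not visibly admit such a filtration until one invokes idempotent completeness of $\sT$ together with a gluing argument for truncation triangles along a triangle. I would structure this as a separate lemma and lean on \cite[cor.\ 5.9]{MSSS} for the technical core, presenting here only the conceptual skeleton.
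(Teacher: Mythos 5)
The paper itself gives no argument for this theorem --- it is quoted directly from \cite[cor.\ 5.9]{MSSS} --- so deferring the technical core (existence of truncation triangles for arbitrary objects of $\thick(\sS)$) to that reference is consistent with the paper's treatment, and your check that the co-heart of a bounded co-t-structure is presilting, with boundedness giving $\thick(\sS)=\sT$, is the right outline for one direction.

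However, the explicit inverse map you construct is wrong under the paper's conventions, and this breaks the rest of your argument. Definition~\ref{d:co-t-str}(i) requires $\Sigma^{-1}\sA\subseteq\sA$ and $\Sigma\sB\subseteq\sB$, so for a silting subcategory $\sS$ the aisle must be generated by the \emph{non-positive} shifts of $\sS$: the correct assignment (recorded in the paper inside the proof of Theorem~\ref{t:inter-co-t-str-and-2-term-silting}, following \cite{MSSS}) takes $\sA$ to be the smallest extension-closed subcategory containing $\Sigma^{\le 0}\sS$ and $\sB$ the smallest extension-closed subcategory containing $\Sigma^{\ge 1}\sS$. Your $\sA=\bigcup_{n\ge 0}\,\sS*\Sigma\sS*\cdots*\Sigma^{n}\sS$, closed under \emph{positive} shifts, satisfies $\Sigma\sA\subseteq\sA$ instead, so the pair $(\sA,\sA^{\perp})$ fails axiom (i) of Definition~\ref{d:co-t-str}; worse, its co-heart cannot be $\sS$: for $\sS\subseteq\Sigma^{-1}\sB$ one would need $\Hom(\sA,\Sigma\sS)=0$, but $\Sigma\sS\subseteq\sA$ already gives $\Hom(\Sigma s,\Sigma s)\neq 0$. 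Hence the ``mutually inverse'' verification in your final step does not go through as written. The repair is to reverse the shift directions throughout (aisle from $\Sigma^{\le 0}\sS$, co-aisle from $\Sigma^{\ge 1}\sS$, and dually in the d\'evissage recovering $\sA$ from its co-heart); with that correction your skeleton matches the argument of \cite{MSSS} that the paper cites.
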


This result has the following consequence.

\begin{Theorem}\label{t:inter-co-t-str-and-2-term-silting}
Let $(\sA,\sB)$ be a bounded co-t-structure on $\sT$ with co-heart $\sS$. Then there is a bijection $(\sA',\sB')\mapsto \sA'\cap\Sigma^{-1}\sB'$ from the first to the second of the following sets.
\begin{itemize}
\item[(i)] Bounded co-t-structures $(\sA',\sB')$ on $
\sT$ with $\sA\subseteq\sA'\subseteq\Sigma \sA$.

\smallskip

\item[(ii)] Silting subcategories of $\sT$ which are in $\sS*\Sigma\sS$.
\end{itemize}
\end{Theorem}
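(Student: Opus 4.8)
The plan is to deduce Theorem \ref{t:inter-co-t-str-and-2-term-silting} from Theorem \ref{t:co-t-str-and-silting} by checking that, under the global bijection between bounded co-t-structures and silting subcategories, the condition $\sA\subseteq\sA'\subseteq\Sigma\sA$ on the co-t-structure side corresponds exactly to the condition $\sS'\subseteq\sS*\Sigma\sS$ on the silting side, where $\sS'=\sA'\cap\Sigma^{-1}\sB'$. Since Theorem \ref{t:co-t-str-and-silting} already tells us $(\sA',\sB')\mapsto\sS'$ is a bijection from all bounded co-t-structures to all silting subcategories, it suffices to show that for a bounded co-t-structure $(\sA',\sB')$ with co-heart $\sS'$ one has $\sA\subseteq\sA'\subseteq\Sigma\sA$ if and only if $\sS'\subseteq\sS*\Sigma\sS$; the restricted map is then automatically a bijection between the two described subsets.

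First I would record the standard description of $\sA'$ in terms of its co-heart: since $(\sA',\sB')$ is bounded with co-heart $\sS'$, one has $\sA'=\bigcup_{n\geq 0}(\Sigma^{-n}\sS'*\cdots*\Sigma^{-1}\sS'*\sS')$ and, dually, $\sB'=\Sigma\bigl(\bigcup_{n\geq 0}(\sS'*\Sigma\sS'*\cdots*\Sigma^n\sS')\bigr)$ — these are the filtration descriptions underlying \cite[cor.\ 5.9]{MSSS} (equivalently, $\sA'$ is the smallest subcategory containing $\sS'$ closed under $\Sigma^{-1}$, extensions and summands, and similarly for $\sB'$). Using Lemma \ref{l:2-co-heart}, which identifies $\sS*\Sigma\sS=\Sigma\sA\cap\Sigma^{-1}\sB$, I would then argue both implications.

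For the forward direction, assume $\sA\subseteq\sA'\subseteq\Sigma\sA$. From $\sA'\subseteq\Sigma\sA$ we get $\Sigma^{-1}\sB'\subseteq\Sigma^{-1}\sB'$; more usefully, dualizing the inclusions gives $\Sigma^{-1}\sB\subseteq\sB'\subseteq\sB$ (because $\sB'=\{t\mid\Hom(\sA',t)=0\}$ and $\sB=\{t\mid\Hom(\sA,t)=0\}$, so a smaller $\sA'$ forces a larger $\sB'$, and $\sA'\subseteq\Sigma\sA$ gives $\Sigma^{-1}\sB=\{t\mid\Hom(\Sigma\sA,t)=0\}\subseteq\sB'$). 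Hence $\sS'=\sA'\cap\Sigma^{-1}\sB'\subseteq\Sigma\sA\cap\Sigma^{-1}\sB=\sS*\Sigma\sS$ by Lemma \ref{l:2-co-heart}. Conversely, suppose $\sS'\subseteq\sS*\Sigma\sS=\Sigma\sA\cap\Sigma^{-1}\sB$. Then in particular $\sS'\subseteq\Sigma\sA$, and since $\Sigma\sA$ is closed under $\Sigma^{-1}$ (as $\sA$ is closed under $\Sigma^{-1}$), under extensions, and under summands (it is idempotent complete and extension closed, being half of a co-t-structure), the filtration description gives $\sA'\subseteq\Sigma\sA$. Likewise $\sS'\subseteq\Sigma^{-1}\sB$ means $\Sigma\sS'\subseteq\sB$, and since $\sB$ is closed under $\Sigma$, extensions and summands, the dual filtration gives $\sB'\subseteq\sB$; rewriting $\sB'\subseteq\sB$ on the $\sA$-side (larger $\sB$-part $\Leftrightarrow$ smaller $\sA$-part is false in that direction, so instead use: $\sB'\subseteq\sB$ implies $\sA=\{t\mid\Hom(t,\sB)=0\}\subseteq\{t\mid\Hom(t,\sB')=0\}=\sA'$) yields $\sA\subseteq\sA'$. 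Thus $\sA\subseteq\sA'\subseteq\Sigma\sA$, completing the equivalence.

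The main obstacle I anticipate is bookkeeping the two-sided passage between the $\sA$-description and the $\sB$-description of a co-t-structure: each of the conditions $\sA\subseteq\sA'$ and $\sA'\subseteq\Sigma\sA$ is most naturally verified after translating it into a statement about $\sB'$ versus $\sB$ (or vice versa), using the orthogonality formulas $\sA=\{t\mid\Hom(t,\sB)=0\}$ and $\sB=\{t\mid\Hom(a,t)=0\ \forall a\in\sA\}$ recalled at the start of the section, together with the fact that both halves are idempotent complete and extension closed. Once one is disciplined about which orthogonal is being taken, each implication is a short filtration argument; no genuinely new input beyond Lemma \ref{l:2-co-heart}, Theorem \ref{t:co-t-str-and-silting}, and the standard filtration description of a bounded co-t-structure by its co-heart is needed.
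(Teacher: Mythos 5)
Your proposal is correct and follows essentially the same route as the paper: the forward inclusion is obtained from $\sA\subseteq\sA'$ giving $\sB'\subseteq\sB$ by orthogonality and then Lemma \ref{l:2-co-heart}, and the converse uses the description (from \cite{MSSS}) of $\sA'$ and $\sB'$ as the closures of $\Sigma^{\leq 0}\sS'$ and $\Sigma^{\geq 1}\sS'$ under extensions (and summands), exactly as in the paper's proof. Only note the harmless slip in the forward direction: dualizing $\sA'\subseteq\Sigma\sA$ gives $\Sigma\sB\subseteq\sB'$, not $\Sigma^{-1}\sB\subseteq\sB'$, but this inclusion is never used — only $\sB'\subseteq\sB$ is needed.
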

\begin{proof}
Let  $(\sA',\sB')$ be a bounded co-t-structure on $\sT$ with $\sA\subseteq\sA'\subseteq\Sigma \sA$. Then $\sB\supseteq\sB'\supseteq\Sigma\sB$. It follows that $\sA'\cap\Sigma^{-1}\sB'\subseteq\Sigma\sA\cap\Sigma^{-1}\sB=\sS*\Sigma\sS$. The last equality is by Lemma~\ref{l:2-co-heart}.

Let $\sS'$ be a silting subcategory of $\sT$ which is in $\sS*\Sigma\sS$. Let $\sA'$ be the smallest extension closed subcategory of $\sT$ containing $
\Sigma^{\leq 0}\sS'$ and $\sB'$ be the smallest extension closed subcategory of $\sT$ containing $\Sigma^{\geq 1}\sS'$. Then $(\sA',\sB')$ is the bounded co-t-structure corresponding to $\sS'$ as in Theorem~\ref{t:co-t-str-and-silting}, see \cite[cor. 5.9]{MSSS}. Since $\sS'\subseteq \sS*\Sigma\sS$, it follows that $\sA'$ is contained in the smallest extension closed subcategory of $\sT$ containing $\Sigma^{\leq 1}\sS$, which is exactly $\Sigma\sA$. Similarly, one shows that $\sB'$ is contained in $\sB$, implying that $\sA'$ contains $\sA$. Thus, $\sA\subseteq\sA'\subseteq\Sigma\sA$.
\end{proof}

The co-t-structures in (i) are called  \emph{intermediate} with respect to $(\sA,\sB)$. The silting subcategories in (ii) are called \emph{2-term} with respect to $\sS$. Clearly, if $(\sA',\sB')$ is intermediate with respect to $(\sA,\sB)$, then $(\sA,\sB)$ is intermediate with respect to $(\Sigma^{-1}\sA',\Sigma^{-1}\sB')$. The next result is a corollary of Theorems~\ref{t:co-t-str-and-silting} and~\ref{t:inter-co-t-str-and-2-term-silting}.

\begin{Corollary}\label{c:shifting-2-term}
Let $\sS$ and $\sS'$ be two silting subcategories of $\sT$. If $\sS'$ is 2-term with respect to $\sS$, then $\sS$ is 2-term with respect to $\Sigma^{-1}\sS'$.
\end{Corollary}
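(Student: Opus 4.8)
The plan is to deduce Corollary \ref{c:shifting-2-term} by combining Theorem \ref{t:co-t-str-and-silting} and Theorem \ref{t:inter-co-t-str-and-2-term-silting} with the elementary observation, already recorded in the paragraph preceding the corollary, that the ``intermediate'' relation between co-t-structures is essentially symmetric after a shift. Concretely, one should first translate the hypothesis into the language of co-t-structures: let $(\sA,\sB)$ be the bounded co-t-structure corresponding to $\sS$ via Theorem \ref{t:co-t-str-and-silting}, and let $(\sA',\sB')$ be the bounded co-t-structure corresponding to $\sS'$. Since $\sS'$ is 2-term with respect to $\sS$, Theorem \ref{t:inter-co-t-str-and-2-term-silting} tells us that $(\sA',\sB')$ is intermediate with respect to $(\sA,\sB)$, i.e. $\sA\subseteq\sA'\subseteq\Sigma\sA$.

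Next I would apply the shift functor $\Sigma^{-1}$. Applying $\Sigma^{-1}$ to the chain $\sA\subseteq\sA'\subseteq\Sigma\sA$ gives $\Sigma^{-1}\sA'\subseteq\sA\subseteq\Sigma\Sigma^{-1}\sA'$. One checks that $(\Sigma^{-1}\sA',\Sigma^{-1}\sB')$ is again a bounded co-t-structure on $\sT$ — this is immediate since $\Sigma$ is an autoequivalence, so it preserves all three axioms in Definition \ref{d:co-t-str} as well as the boundedness condition — and its co-heart is $\Sigma^{-1}\sA'\cap\Sigma^{-1}\Sigma^{-1}\sB'=\Sigma^{-1}(\sA'\cap\Sigma^{-1}\sB')=\Sigma^{-1}\sS'$. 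Thus $(\Sigma^{-1}\sA',\Sigma^{-1}\sB')$ is the bounded co-t-structure corresponding to $\Sigma^{-1}\sS'$ under the bijection of Theorem \ref{t:co-t-str-and-silting}.

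Finally, the chain $\Sigma^{-1}\sA'\subseteq\sA\subseteq\Sigma(\Sigma^{-1}\sA')$ says precisely that $(\sA,\sB)$ is intermediate with respect to $(\Sigma^{-1}\sA',\Sigma^{-1}\sB')$, so by the other direction of Theorem \ref{t:inter-co-t-str-and-2-term-silting} — applied with the base co-t-structure now taken to be $(\Sigma^{-1}\sA',\Sigma^{-1}\sB')$, whose co-heart is $\Sigma^{-1}\sS'$ — the co-heart $\sS$ of $(\sA,\sB)$ is a 2-term silting subcategory with respect to $\Sigma^{-1}\sS'$. (One uses here that $\sS$ is already known to be silting, so it indeed lies in the image of the bijection for the co-t-structure $(\Sigma^{-1}\sA',\Sigma^{-1}\sB')$.) This completes the proof.

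There is no serious obstacle: the only point requiring any care is bookkeeping with the shift, namely verifying that $\Sigma^{-1}$ of a bounded co-t-structure is again a bounded co-t-structure with co-heart shifted by $\Sigma^{-1}$, and that the ``intermediate'' relation converts correctly under this shift — but both are formal consequences of $\Sigma$ being an autoequivalence, and the second is exactly the remark ``if $(\sA',\sB')$ is intermediate with respect to $(\sA,\sB)$, then $(\sA,\sB)$ is intermediate with respect to $(\Sigma^{-1}\sA',\Sigma^{-1}\sB')$'' made just before the statement of the corollary. So the content of the corollary is really just a repackaging of Theorems \ref{t:co-t-str-and-silting} and \ref{t:inter-co-t-str-and-2-term-silting} through this symmetry.
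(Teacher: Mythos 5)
Your proposal is correct and is essentially the paper's intended argument: the paper leaves the proof implicit, remarking only that intermediacy of $(\sA',\sB')$ with respect to $(\sA,\sB)$ gives intermediacy of $(\sA,\sB)$ with respect to $(\Sigma^{-1}\sA',\Sigma^{-1}\sB')$ and that the corollary follows from Theorems \ref{t:co-t-str-and-silting} and \ref{t:inter-co-t-str-and-2-term-silting}, which is exactly the translation-shift-translate-back argument you spell out. Your additional checks (that $(\Sigma^{-1}\sA',\Sigma^{-1}\sB')$ is again a bounded co-t-structure with co-heart $\Sigma^{-1}\sS'$) are the routine verifications the paper tacitly assumes.
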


\section{Two-term silting subcategories and support $\tau$-tilting pairs}

In this section, $\sT$ is an essentially small, idempotent complete
triangulated category, and $\sS \subseteq \sT$ is a silting
subcategory.

\begin{Remark}
\label{rmk:blanket-general}
\begin{itemize}

  \item[(i)] There is a functor 
\[
  F\;:\;\;\sT \rightarrow \Mod\, \sS
  \;\; , \;\;
  t \mapsto \sT( -,t )\mid_{ \sS },
\]
sometimes known as the restricted Yoneda functor.
\label{remark:restricted-yoneda-functor}

\smallskip

  \item[(ii)]  By Yoneda's Lemma, for
    $M \in \Mod\, \sS$ and $s \in \sS$, there is a
    natural isomorphism
\[
  \Hom_{ \Mod\, \sS }\big( \sS( -,s ) , M \big)
  \stackrel{ \sim }{ \rightarrow } M( s );
\]
see \cite[p.\ 185]{A}.

\smallskip

  \item[(iii)] By \cite[prop.\ 6.2(3)]{IY} the functor $F$ from (i)
    induces an equivalence 
\begin{equation}
\label{equ:mod_equivalence}
  ( \sS * \Sigma \sS ) / [ \Sigma \sS ]
  \stackrel{\sim}{\rightarrow} \mod\, \sS.
\end{equation}
To get this from \cite[prop.\ 6.2(3)]{IY}, set $\cX = \sS$, $\cY =
\Sigma \sS$, and observe that the proof works in the generality of the
present paper.

\end{itemize}
\end{Remark}


\begin{Lemma}
\label{lem:S-general}
Let $\sU$ be a full subcategory of $\sS*\Sigma\sS$.  For
$u \in \sU$ let 
\begin{equation}
\label{equ:triangle-general}
  s^u_1 \stackrel{ \sigma }{ \rightarrow } s^u_0 \rightarrow u \rightarrow \Sigma s^u_1
\end{equation}
be a distinguished triangle in $\sT$ with $s^u_0, s^u_1 \in \sS$.
Applying the functor $F$ gives a projective presentation
\begin{equation}
\label{equ:presentation-general}
  P^U_1 \stackrel{ \pi^u }{ \rightarrow } P^U_0 \rightarrow U \rightarrow 0
\end{equation}
in $\mod\, \sS$, and
\begin{align*}
 {\mbox{ $\sU$ is a presilting subcategory }} \Leftrightarrow
  \mbox{ the class $\{\, \pi^u \,|\, u \in \sU \,\}$ has Property (S). } 
\end{align*}
\end{Lemma}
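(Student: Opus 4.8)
The plan is to translate the triangle \eqref{equ:triangle-general} and the presilting condition $\sT(u,\Sigma u')=0$ into statements about $\mod\,\sS$ via the functor $F$, using only Yoneda's Lemma and the long exact sequence in $\sT$. First I would record that $F$ sends the objects of $\sS$ to projective objects of $\Mod\,\sS$: indeed $F(s)=\sT(-,s)\!\mid_\sS=\sS(-,s)$, which is representable, hence projective, and $F$ is additive, so $P^U_i=F(s^u_i)$ is a finitely generated projective $\sS$-module. Applying $F$ to the triangle \eqref{equ:triangle-general} and using that $\sT(-,-)\!\mid_\sS$ is a cohomological functor, the piece
\[
  \sT(-,\Sigma^{-1}u)\!\mid_\sS \rightarrow F(s^u_1)\xrightarrow{\pi^u} F(s^u_0)\rightarrow F(u)\rightarrow \sT(-,\Sigma s^u_1)\!\mid_\sS
\]
is exact in $\Mod\,\sS$; since $\sS$ is presilting, $\sT(s',\Sigma s^u_1)=0$ for all $s'\in\sS$, so $\sT(-,\Sigma s^u_1)\!\mid_\sS=0$, which gives the projective presentation \eqref{equ:presentation-general} with $F(u)=U$ and shows $\Coker\pi^u\cong U$. (That $U=F(u)$ and the $P^U_i$ lie in $\mod\,\sS$ should already be implicit from Remark \ref{rmk:blanket-general}(iii), but spelling out the exact sequence is what makes the equivalence below transparent.)

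Next I would compute $\Hom_{\mod\,\sS}(\pi^u,U')$ for $u,u'\in\sU$. By Yoneda (Remark \ref{rmk:blanket-general}(ii)), $\Hom_{\Mod\,\sS}(F(s^u_i),U')=\Hom_{\Mod\,\sS}(\sS(-,s^u_i),U')\cong U'(s^u_i)=\sT(s^u_i,u')$ naturally in $s^u_i$; under these identifications the map $\Hom_{\mod\,\sS}(\pi^u,U')\colon\Hom(P^U_0,U')\to\Hom(P^U_1,U')$ becomes
\[
  \sT(\sigma,u')\;\colon\;\sT(s^u_0,u')\longrightarrow\sT(s^u_1,u'),
\]
i.e.\ precomposition with $\sigma\colon s^u_1\to s^u_0$. (Here I would invoke naturality of the Yoneda isomorphism to check the square relating $\pi^u=F(\sigma)$ to $\sigma$ commutes; this is the only slightly fiddly compatibility check, but it is genuinely routine.) Now apply the cohomological functor $\sT(-,u')$ to the triangle \eqref{equ:triangle-general}: the relevant exact stretch reads
\[
  \sT(\Sigma s^u_1,u')\rightarrow\sT(u,u')\rightarrow\sT(s^u_0,u')\xrightarrow{\sT(\sigma,u')}\sT(s^u_1,u')\rightarrow\sT(u,\Sigma^{-1}u')\rightarrow\cdots
\]
Wait — to land on $\sT(u,\Sigma u')$ one should instead rotate and use $\sT(\Sigma^{-1}(\Sigma s^u_1),u')=\sT(s^u_1,u')\to\sT(u,\Sigma^{-1}u')$; applying $\Sigma$ throughout, the portion of the long exact sequence ending $\cdots\to\sT(s^u_0,\Sigma u')\to\sT(s^u_1,\Sigma u')\to\sT(\Sigma u,\Sigma u')\to\cdots$ is not quite it either. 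The clean statement: rotating \eqref{equ:triangle-general} to $s^u_0\to u\to\Sigma s^u_1\to\Sigma s^u_0$ and applying $\sT(-,u')$ gives exactness of $\sT(\Sigma s^u_0,u')\to\sT(\Sigma s^u_1,u')\to\sT(u,u')\to\sT(s^u_0,u')\xrightarrow{\sT(\sigma,u')}\sT(s^u_1,u')$. Since $\sS$ is presilting, $\sT(\Sigma s^u_0,u')=\sT(s^u_0,\Sigma^{-1}u')$ need not vanish, but $\sT(u,\Sigma u')$ is what we want; so instead I apply $\sT(u,-)$-type reasoning is wrong too. The correct move: from \eqref{equ:triangle-general} and $\sT(-,\Sigma u')$ we get $\sT(\Sigma s^u_1,\Sigma u')\to\sT(\Sigma s^u_0,\Sigma u')\to?$ — no. Let me state it properly in the write-up: apply $\sT(-,u')$ to the triangle $s^u_1\xrightarrow{\sigma}s^u_0\to u\to\Sigma s^u_1$ to obtain the exact sequence $\sT(\Sigma s^u_0,u')\to\sT(\Sigma s^u_1,u')\to\sT(u,u')\to\sT(s^u_0,u')\xrightarrow{\sigma^*}\sT(s^u_1,u')$, and now use that $\sT(\Sigma s^u_1,u')=\sT(s^u_1,\Sigma^{-1}u')$ is zero because $\sS$ is presilting \emph{only in the} $\Sigma^{\geq 1}$ \emph{direction}, which it is not. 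Hence the honest target is: $\sigma^*$ surjective $\iff$ the connecting map $\sT(u,\Sigma s^u_0)$-region vanishes, and one extracts precisely $\sT(u,\Sigma u')$ after one more rotation together with presilting of $\sS$ to kill the stray terms $\sT(\Sigma^{j}s^u_i,\Sigma u')$ for $j\geq 1$.

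\textbf{Putting it together.}
The upshot, which I would present cleanly, is: for each pair $u,u'\in\sU$ there is an exact sequence
\[
  \sT(s^u_0,\Sigma u')\longrightarrow\sT(s^u_1,\Sigma u')\longrightarrow\sT(u,\Sigma u')\longrightarrow\sT(s^u_0,\Sigma^{2}u')
\]
(obtained by applying $\sT(-,\Sigma u')$ to \eqref{equ:triangle-general} and rotating once), together with $\sT(s^u_0,\Sigma u')=0=\sT(s^u_1,\Sigma u')$ being \emph{false} in general but $\sT(s^u_i,\Sigma u')$ for $u'\in\sS$ vanishing; the correct bookkeeping is that presilting of $\sS$ forces $\sT(s^u_i,\Sigma^{\ge 1}s')=0$, and feeding $u'\in\sS\ast\Sigma\sS$ through its own triangle reduces $\sT(u,\Sigma u')$ to the single cokernel $\Coker\!\big(\sT(s^u_0,u'')\xrightarrow{\sigma^*}\sT(s^u_1,u'')\big)$ where $u''$ is chosen so that $F(u'')\cong U'$. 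Under the Yoneda identification of the previous paragraph this cokernel is exactly $\Coker\big(\Hom_{\mod\,\sS}(\pi^u,U')\big)$. Therefore $\sT(u,\Sigma u')=0$ for all $u,u'\in\sU$ $\iff$ $\Hom_{\mod\,\sS}(\pi^u,U')$ is surjective for all $u,u'\in\sU$ $\iff$ the class $\{\pi^u\mid u\in\sU\}$ has Property (S), which is the claim. The main obstacle is the diagram-chasing bookkeeping in the long exact sequences: one must carefully track which $\Sigma$-shifts appear and repeatedly invoke the presilting property of $\sS$ (and of $\sU\subseteq\sS\ast\Sigma\sS$, via Lemma \ref{l:2-co-heart}) to annihilate all terms except the single $\Coker$ that matches $\Hom(\pi^u,U')$; the Yoneda naturality check that $F(\sigma)^\ast$ corresponds to $\sigma^\ast$ is the only other point requiring care, and it is straightforward.
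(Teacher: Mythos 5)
Your overall strategy coincides with the paper's: identify $\Hom_{\mod\,\sS}(\pi^u,F(u'))$ with $\sigma^*\colon\sT(s^u_0,u')\to\sT(s^u_1,u')$ via Yoneda, then read off surjectivity from the long exact sequence of \eqref{equ:triangle-general}, using the presilting property of $\sS$ to kill the extra terms. But the central computation is never carried out correctly. The fragment you finally display,
\[
\sT(s^u_0,\Sigma u')\to\sT(s^u_1,\Sigma u')\to\sT(u,\Sigma u')\to\sT(s^u_0,\Sigma^2 u'),
\]
is not a portion of the long exact sequence coming from \eqref{equ:triangle-general}: after $\sT(s^u_1,\Sigma u')$ the next term is $\sT(\Sigma^{-1}u,\Sigma u')\cong\sT(u,\Sigma^2 u')$, not $\sT(u,\Sigma u')$, and the term preceding $\sT(u,\Sigma u')$ in the correct rotation is $\sT(\Sigma s^u_1,\Sigma u')\cong\sT(s^u_1,u')$. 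Worse, your parenthetical assertion that $\sT(s^u_i,\Sigma u')=0$ is ``false in general'' is itself wrong, and this vanishing is precisely what drives the proof: for $u'\in\sS*\Sigma\sS$ and $s\in\sS$ one gets $\sT(s,\Sigma u')=0$ and $\sT(s,\Sigma^2 u')=0$ by applying $\sT(s,-)$ to a triangle $s'_1\to s'_0\to u'\to\Sigma s'_1$ with $s'_i\in\sS$ and using that $\sS$ is presilting. The clean argument (the paper's) is: apply $\sT(-,u')$ to \eqref{equ:triangle-general} to obtain the exact sequence
\[
\sT(s^u_0,u')\xrightarrow{\ \sigma^*\ }\sT(s^u_1,u')\to\sT(\Sigma^{-1}u,u')\to\sT(\Sigma^{-1}s^u_0,u'),
\]
note $\sT(\Sigma^{-1}s^u_0,u')\cong\sT(s^u_0,\Sigma u')=0$ by the vanishing just recalled, and conclude that $\sigma^*$ is surjective if and only if $\sT(\Sigma^{-1}u,u')\cong\sT(u,\Sigma u')=0$. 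Your alternative route ``feeding $u'$ through its own triangle'' and choosing an auxiliary $u''$ with $F(u'')\cong U'$ is never made precise and is not needed; as written, that part of the proposal does not amount to a proof.

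There is also a second, smaller gap: in the final equivalence you tacitly replace ``$\sU$ is presilting'' by ``$\sT(u,\Sigma u')=0$ for all $u,u'\in\sU$''. Property (S) only controls the degree-one vanishing, so to deduce that $\sU$ is presilting you must also observe that $\sT(u,\Sigma^{\geq 2}u')=0$ holds automatically for $u,u'\in\sS*\Sigma\sS$ (again by resolving both objects through triangles with terms in $\sS$ and using that $\sS$ is presilting); this is exactly \eqref{equ:Ext_geq_2_vanishing_general} in the paper's proof. Without stating this, the implication from Property (S) to ``$\sU$ presilting'' is incomplete.
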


\begin{proof}
Clearly $F$ applied to the distinguished
triangle \eqref{equ:triangle-general} gives the projective
presentation \eqref{equ:presentation-general}.

To get the bi-implication in the last line of the lemma, first note that for $u,u'\in\sU$ we have
\begin{equation}
\label{equ:Ext_geq_2_vanishing_general}
  \sT( u,\Sigma^{ \geq 2 }u' ) = 0
\end{equation}
since $u,u' \in \sS * \Sigma \sS$. 

By Remark \ref{rmk:blanket-general}(ii), the map
$\Hom_{\mod\,\sS}(\pi,F(u'))$ is the same as
\begin{equation}
\label{equ:S_picture_morphism_general}
  \sT( s^u_0,u' ) \rightarrow \sT( s^u_1,u' ).
\end{equation}
So the class $\{\, \pi^u \,|\, u\in \sU \,\}$ has Property (S) if and
only if the morphism \eqref{equ:S_picture_morphism_general} is
surjective for all $u , u' \in \sU$.  However, the distinguished
triangle \eqref{equ:triangle-general} induces an exact sequence
\[
  \sT( s^u_0,u' )
  \rightarrow \sT( s^u_1,u' )
  \rightarrow \sT( \Sigma^{ -1 }u,u' )
  \rightarrow \sT( \Sigma^{ -1 }s^u_0,u' )
\]
where the last module is $0$ since $u' \in \sS * \Sigma \sS$.  So
\eqref{equ:S_picture_morphism_general} is surjective if and only if $\sT(
\Sigma^{-1}u,u') \cong \sT( u,\Sigma u' ) = 0$.  This happens  for all $u,u'\in\sU$  if and
only if $\sU$ is presilting because of equation
\eqref{equ:Ext_geq_2_vanishing_general}.
\end{proof}

\begin{Theorem}
\label{thm:main-general}
The functor $F : \sT \rightarrow \Mod\, \sS$ induces a
surjection
\[
  \Phi : \sU
  \mapsto \big( F( \sU ) , \sS \cap \Sigma^{-1} \sU \big)
\]
from the first to the second of the following sets.
\begin{itemize}

\item[(i)] Presilting subcategories of $\sT$ which are contained in
  $\sS * \Sigma \sS$.

\smallskip

\item[(ii)] $\tau$-rigid pairs of $\mod\, \sS$.

\end{itemize}
It restricts to a surjection $\Psi$ from the first to the second of the following sets.
\begin{itemize}
\item[(iii)] Silting subcategories of $\sT$ which are contained in $\sS*\Sigma\sS$.
\smallskip
\item[(iv)] Support $\tau$-tilting pairs of $\mod\, \sS$.
\end{itemize}
\end{Theorem}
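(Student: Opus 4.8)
The statement asserts two things: (1) $\Phi$ is a well-defined surjection from presilting subcategories in $\sS * \Sigma\sS$ onto $\tau$-rigid pairs of $\mod\,\sS$, and (2) $\Phi$ restricts to a surjection $\Psi$ from silting subcategories in $\sS*\Sigma\sS$ onto support $\tau$-tilting pairs. I would organise the proof around the equivalence $(\sS*\Sigma\sS)/[\Sigma\sS] \xrightarrow{\sim} \mod\,\sS$ from Remark \ref{rmk:blanket-general}(iii), using Lemma \ref{lem:S-general} to handle the $\tau$-rigidity/presilting dictionary, and then checking the extra ``support/silting'' conditions separately.

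\textbf{Step 1: $\Phi$ is well-defined.} Let $\sU \subseteq \sS*\Sigma\sS$ be presilting. By Lemma \ref{lem:S-general}, choosing for each $u \in \sU$ a triangle \eqref{equ:triangle-general} and applying $F$ gives projective presentations whose class $\{\pi^u\}$ has Property (S); hence $F(\sU)$ is $\tau$-rigid. For the pair to be $\tau$-rigid I must also check $F(\sU)(\sS \cap \Sigma^{-1}\sU) = 0$, i.e. $\sT(e, u)|_{\text{restricted}} = 0$ — more precisely $F(u)(e) = \sT(e,u) = 0$ — for $e \in \sS$ with $\Sigma e \in \sU$ and $u \in \sU$. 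This follows since $\sU$ is presilting: $\sT(e,u) = \sT(\Sigma e, \Sigma u) $, and $\Sigma e \in \sU$, $u \in \sU$ forces this to vanish because $\sT(u, \Sigma^{\geq 1}u')=0$ applied with the roles giving $\sT(\Sigma e, \Sigma u) \subseteq \sT(\sU, \Sigma^{\geq 1}\sU)$... I need to be careful here: $\Sigma e \in \sU$ and $u \in \sU$, so $\sT(u, \Sigma(\Sigma e)) \ne$ what I want; rather I want $\sT(\Sigma e, \Sigma u)$, and writing $v = \Sigma e \in \sU$, $\sT(v, \Sigma u)$ vanishes since $\sU$ is presilting. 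Good. So $\Phi(\sU)$ is a genuine $\tau$-rigid pair. (One should also note $\sS \cap \Sigma^{-1}\sU$ is a subcategory of $\sS$ in the sense of closure under summands and sums, which is routine.)

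\textbf{Step 2: surjectivity of $\Phi$.} Given a $\tau$-rigid pair $(\sM, \sE)$, pick for each $m \in \sM$ a projective presentation $P_1^m \xrightarrow{\pi^m} P_0^m \to m \to 0$ witnessing Property (S). Via the equivalence \eqref{equ:mod_equivalence}, each $\pi^m$ lifts to a morphism $\sigma^m\colon s_1^m \to s_0^m$ in $\sS$, unique up to the ideal $[\Sigma\sS]$, whose cone $u_m \in \sS*\Sigma\sS$ satisfies $F(u_m) \cong m$. Let $\sU = \add\big(\{u_m \mid m \in \sM\} \cup \Sigma\sE\big)$. Then $F(\sU) = \add\, F(\{u_m\}) = \sM$ since $F(\Sigma\sE) = 0$ (as $F$ kills $\Sigma\sS \supseteq \Sigma\sE$ — here I use $\sE \subseteq \sS$) and $F$ is additive. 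By Lemma \ref{lem:S-general} (the $\Leftarrow$ direction), the fact that $\{\pi^{u_m}\}$ has Property (S) together with needing $\{\pi^u\}$ for \emph{all} $u \in \sU$ — including the added $\Sigma\sE$ summands, whose presentations are $0 \to 0 \to 0$, trivially Property (S), and the compatibility across the two types, which needs $\sT(\Sigma e, \Sigma^{\geq 1}u_m) $-type vanishing coming from $\sM(\sE)=0$ — shows $\sU$ is presilting. Finally I must identify $\sS \cap \Sigma^{-1}\sU$ with $\sE$: one inclusion is clear by construction; the other needs that no $u_m$ nor nontrivial combination lands in $\Sigma\sS$ beyond $\Sigma\sE$, which again uses $\sM = F(\sU)$ and that $F$ is faithful modulo $[\Sigma\sS]$. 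This bookkeeping is the first real obstacle.

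\textbf{Step 3: the silting/support $\tau$-tilting restriction.} For $\sU$ silting in $\sS*\Sigma\sS$, I must show $\Phi(\sU)$ is support $\tau$-tilting, i.e. (a) $\sS \cap \Sigma^{-1}\sU = \Ker(F(\sU))$, and (b) for each $s \in \sS$ there is an exact sequence $\sS(-,s) \xrightarrow{f} m^0 \to m^1 \to 0$ in $\mod\,\sS$ with $m^i \in F(\sU)$ and $f$ a left $F(\sU)$-approximation. For (b), since $\thick(\sU) = \sT$ and $\sU$ is two-term, the object $s \in \sS \subseteq \Sigma\sA$ admits (using $\sS*\Sigma\sS = \Sigma\sA \cap \Sigma^{-1}\sB$ and the co-t-structure machinery, or directly Bongartz-type completion for silting) a triangle $u_1 \to u_0 \to \Sigma s \to \Sigma u_1$ with $u_i \in \sU$; rotating and applying $F$ yields exactly the desired sequence, and the approximation property of $f$ follows from $\sT(u_0, \Sigma u') = 0$ for $u' \in \sU$. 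For (a), ``$\subseteq$'' is Step 1; ``$\supseteq$'': if $F(u) = 0$ for the corresponding object then $u \in \Sigma\sS$, and a silting $\sU$ forces $u \in \Sigma(\sS \cap \Sigma^{-1}\sU)$... I'll need to argue that a $\Sigma\sS$-object in a silting subcategory of $\sS*\Sigma\sS$ must actually lie in $\Sigma\sE$ where $\sE = \sS\cap\Sigma^{-1}\sU$ — plausibly because $\Sigma^{-1}u$ would then be a presilting object of $\sS$ compatible with $\sU$, and maximality of silting (Theorem \ref{t:co-t-str-and-silting}, or \cite[Thm.]{AI}) pins it down. For surjectivity of $\Psi$: given a support $\tau$-tilting pair $(\sM,\sE)$, produce $\sU$ as in Step 2 and upgrade ``presilting'' to ``silting'' by showing $\thick(\sU) = \sT$; this is where I'd invoke that a presilting subcategory of $\sS*\Sigma\sS$ which contains (up to the established correspondence) enough objects to resolve every $s \in \sS$ on the right — guaranteed by the support $\tau$-tilting approximation condition via the triangle lift — must be silting, using Theorem \ref{t:co-t-str-and-silting} together with Theorem \ref{t:inter-co-t-str-and-2-term-silting} to recognise it as the co-heart of an intermediate co-t-structure.

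\textbf{Main obstacle.} The genuinely delicate point is Step 2 combined with the $\Ker$-identification in Step 3(a): controlling exactly which objects of $\sS*\Sigma\sS$ map into $\Sigma\sS$ under $F$, and ensuring that the $\add$-closure $\sU$ built from a $\tau$-rigid pair doesn't accidentally contain extra indecomposable summands (forcing us to verify Property (S) holds across \emph{all} pairs from $\sU$, not just the chosen generators). This is where the hypothesis that the pair is $\tau$-rigid — Property (S) for a \emph{class} indexed by all of $\sM$, not a single presentation — does the work, via Lemma \ref{lem:S-general}, but wiring it together with the ideal quotient \eqref{equ:mod_equivalence} requires care. Upgrading presilting to silting in the surjectivity of $\Psi$ (i.e. $\thick(\sU)=\sT$) is the second substantial step, and I expect to lean on Theorems \ref{t:co-t-str-and-silting} and \ref{t:inter-co-t-str-and-2-term-silting} there.
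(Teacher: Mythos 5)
Your overall architecture matches the paper's: well-definedness of $\Phi$ via Lemma \ref{lem:S-general} together with the vanishing $\sT(e,u)=0$ for $\Sigma e\in\sU$; surjectivity onto $\tau$-rigid pairs by lifting the presentations to cones and taking the additive hull with $\Sigma\sE$ (the cross-term vanishing $\sT(e,\mathrm{cone}(f_m))=0$ coming from $\sM(\sE)=0$, exactly as in the paper); and, for silting $\sU$, the left approximations coming from $\sS\subseteq(\Sigma^{-1}\sU)*\sU$ (Corollary \ref{c:shifting-2-term}) plus $\sT(\sU,\Sigma\sU)=0$. Two of your steps are under-argued but repairable: the identification $\sS\cap\Sigma^{-1}\sU=\sE$ in Step 2, which you flag but do not resolve, and the inclusion $\Ker F(\sU)\subseteq\sS\cap\Sigma^{-1}\sU$ in Step 3(a), where your statement of what must be shown is garbled --- the correct claim is that $\sT(s,\sU)=0$ forces $\Sigma s\in\sU$, which the paper gets by observing that $\sU\oplus\add(\Sigma s)$ is again a two-term silting subcategory and invoking \cite[thm.\ 2.18]{AI}; your ``maximality of silting'' remark gestures at this but does not carry it out.

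The genuine gap is the surjectivity of $\Psi$. You propose to take the preimage $\sU$ built in Step 2 and show that it is already silting, i.e.\ $\thick(\sU)=\sT$, because it ``resolves every $s\in\sS$ on the right up to the established correspondence'', recognising it as a co-heart via Theorems \ref{t:co-t-str-and-silting} and \ref{t:inter-co-t-str-and-2-term-silting}. This fails on two counts. First, those theorems take a silting subcategory as input, so they cannot be used to certify that $\sU$ is silting. Second, and more seriously, the resolving objects lie in $\sU$ only modulo $[\Sigma\sS]$: lifting the left approximation $F(s)\to F(u_s^0)$ gives a triangle $s\to u_s^0\to t_s\to\Sigma s$ with $F(t_s)\cong F(u_s^1)\in F(\sU)$, but $t_s$ itself need not belong to $\sU$ (it may differ by $\Sigma\sS$-summands outside $\Sigma\sE$), so one cannot conclude $s\in\thick(\sU)$, and $\thick(\sU)=\sT$ is simply not available for this $\sU$. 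The paper's proof of (d) instead \emph{enlarges} $\sU$ to $\widetilde{\sU}=\add\big(\sU\cup\{t_s\mid s\in\sS\}\big)$ and then does real work: checking $\widetilde{\sU}\subseteq\sS*\Sigma\sS$; checking $\widetilde{\sU}$ is presilting, which requires three separate Hom-vanishing computations, one of them ($\sT(t_s,\Sigma u)=0$) using the fullness of $F$ from the equivalence \eqref{equ:mod_equivalence} together with the approximation property of $f$; concluding that $\widetilde{\sU}$ is silting because the triangles place every $s\in\sS$ in $\thick(\widetilde{\sU})$; and finally verifying $\Phi(\widetilde{\sU})=(\sM,\sE)$, where $F(\widetilde{\sU})=F(\sU)$ uses $F(t_s)\cong F(u_s^1)$ and $\sS\cap\Sigma^{-1}\widetilde{\sU}=\sE$ follows by sandwiching between $\sE$ and $\Ker F(\sU)=\sE$ --- the last equality needing the support $\tau$-tilting hypothesis $\sE=\Ker(\sM)$ via part (c). None of this enlargement-and-reverification machinery appears in your sketch, so as written the surjectivity of $\Psi$ is not established.
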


\begin{proof}
We need to prove
\begin{itemize}
\item[(a)] The map $\Phi$ has values in $\tau$-rigid pairs of $\mod\,
\sS$. 
\smallskip
\item[(b)] The map $\Phi$ is surjective.
\smallskip
\item[(c)] The map $\Psi$ has values in support $\tau$-tilting pairs
of $\mod\, \sS$.
\smallskip
\item[(d)] The map $\Psi$ is surjective.
\end{itemize}

(a) Let $\sU$ be a presilting subcategory of $\sT$ which is contained
in $\sS*\Sigma\sS$. For each $u\in\sU$, there is a distinguished
triangle $s_1\rightarrow s_0\rightarrow u\rightarrow \Sigma s_1 $ with
$s_0,s_1\in\sS$. Lemma \ref{lem:S-general} says that $F$ sends the set
of these triangles to a set of projective presentations
\eqref{equ:presentation-general} which has Property (S) because $\sU$
is presilting. It remains to show that for $u\in\sU$ and $u'\in
\sS\cap \Sigma^{-1} \sU$ we have $F(u)(u')=0$. This is
again true because $F(u)(u')=\sT(u',u)$ holds and $\sU$ is presilting.

\smallskip

(b) Let $(\sM,\sE)$ be a $\tau$-rigid pair of $\mod\,\sS$.  For
each $m \in \sM$ take a projective
presentation
\begin{equation}
\label{equation:presentation-in-surjectivity}
  P_1
  \stackrel{\pi^m}{\rightarrow} P_0
  \rightarrow m
  \rightarrow 0
\end{equation}
such that the class $\{\, \pi^m \,|\, m \in \sM \,\}$ has Property
(S). By Remark \ref{rmk:blanket-general}(ii) there is a unique morphism
$f_m: s_1 \rightarrow s_0$ in $\sS$ such that $F( f_m ) =
\pi^m$. Moreover, $F( \mathrm{cone}(f_m) ) \cong m$. Since
\eqref{equation:presentation-in-surjectivity} has Property (S), it
follows from Lemma \ref{lem:S-general} that the category
\[
  \sU_1 := \{\, \mathrm{cone}(f_m) \,|\, m \in \sM \,\}
\]
is a presilting subcategory, and $\sU_1 \subseteq \sS*\Sigma\sS$ is
clear.  Let $\sU$ be the additive hull of $\sU_1$ and $\Sigma\sE$ in
$\sS*\Sigma\sS$. 
Now we show that $\sU$ is a presilting subcategory of $\sT$.
Let $e\in\sE$. Clearly we have $\sT(\mathrm{cone}(f_m)\oplus\Sigma e,\Sigma^2e)=0$.
Applying $\sT(e,-)$ to a triangle
$s_1\xrightarrow{f_m}s_0\to \mathrm{cone}(f_m)\to\Sigma s_1$,
we have an exact sequence
\[\sT(e,s_1)\xrightarrow{f_m}\sT(e,s_0)\to\sT(e,\mathrm{cone}(f_m))\to0,\]
which is isomorphic to $P_1(e)\xrightarrow{\pi^m}P_0(e)\to m(e)\to0$
by Remark \ref{rmk:blanket-general}(ii).
The condition $\sM(\sE)=0$ implies that $\sT(e,\mathrm{cone}(f_m))=0$.
Thus the assertion follows.
It is clear that $\Phi( \sU ) =( \sM , \sE )$.

\smallskip

(c) Let $\sU$ be a silting subcategory of $\sT$ which is contained in
$\sS*\Sigma\sS$.

Let $s\in\sS$ be an object of $\Ker F(\sU)$, i.e. $\sT( s , u )=0$ for each
$u \in \sU$. This implies that $\sU \oplus \add( \Sigma s )$ is also a
silting subcategory of $\sT$ in $\sS*\Sigma \sS$. It follows from
\cite[Theorem 2.18]{AI} that $\Sigma s$ belongs to $\sU$ whence $s$
belongs to $\Sigma^{-1}\sU$ and hence to $\sS\cap\Sigma^{-1}\sU$. This
shows the inclusion $\Ker F(\sU) \subseteq \sS \cap \Sigma^{-1}
\sU$. The reverse inclusion was shown in (a), so $\Ker F(\sU) = \sS
\cap \Sigma^{-1} \sU$.

By Corollary~\ref{c:shifting-2-term}, we have $\sS\subseteq (\Sigma^{-1}\sU)*\sU$. 
In particular, for $s\in\sS$, there is a distinguished triangle
\begin{equation}
\label{equation:triangle-copresentation}
  s
  \rightarrow u^0
  \rightarrow u^1
  \rightarrow \Sigma s
\end{equation}
Applying $F$ we obtain an exact sequence
\begin{equation}
  F(s)
  \stackrel{f}{\rightarrow} F(u^0)
  \rightarrow F(u^1)
  \rightarrow 0.
\end{equation}
For each $u \in \sU$ there is the following commutative diagram.
\[
  \xymatrix{
    \sT(u^0,u)\ar[r]\ar[d] & \sT(s,u)\ar[r] \ar[d] & \sT(u^1,\Sigma u)=0\\
    \Hom_{\mod\,\sS}(F(u^0),F(u))\ar[r]_-{f^*} & \Hom_{\mod\,\sS}(F(s),F(u))
           }
\]
The right vertical map is induced from the Yoneda embedding, so it is
bijective. It follows that $f^*$ is surjective, that is, $f$ is a left
$F(\sU)$-approximation.  Altogether, we have shown that $\Phi( \sU )$
is a support $\tau$-tilting pair of $\mod\, \sS$. 

\smallskip

(d)  Let $(\sM,\sE)$ be a support $\tau$-tilting pair of $\mod\,\sS$ and let $\sU$ be the preimage of $(\sM,\sE)$ under $\Phi$ constructed in (b). 

By definition, for each $s\in\sS$, there is an exact sequence
$F(s)\stackrel{f}{\rightarrow} F(u_s^0)\rightarrow F(u_s^1)\rightarrow
0$ such that $u_s^0,u_s^1\in\sU$ and $f$ is a left
$F(\sU)$-approximation.  By Yoneda's lemma, there is a unique morphism
$\alpha:s\rightarrow u_s^0$ such that $F( \alpha ) = f$.  Form the
distinguished triangle
\begin{equation}
\label{equation:triangle-approxi}
  s
  \stackrel{\alpha}{\rightarrow} u_s^0
  \rightarrow t_s
  \rightarrow \Sigma s.
\end{equation}
Let $\widetilde{\sU}$ be the additive closure of $\sU$ and $\{\, t_s
\,|\, s \in \sU \,\}$. We claim that $\widetilde{\sU}$ is a silting
subcategory of $\sT$ contained in $\sS*\Sigma\sS$ such that $\Phi(
\widetilde{\sU} ) = ( \sM , \sE )$.

First, $t_s \in u_s^0*\Sigma s \subseteq \sS*\Sigma \sS$. Therefore,
$\widetilde{\sU} \subseteq \sS * \Sigma \sS$.

Secondly, by applying $F$ to the triangle
\eqref{equation:triangle-approxi}, we see that $F(t_s)$ and $F(u_s^1)$
are isomorphic in $\mod\,\sS$. For $u\in\sU$, consider the following
commutative diagram.
\[
  \xymatrix{
    \sT(u_s^0,u) \ar[r]^-{\alpha^*} \ar[d]_{F(-)} 
      & \sT(s,u)\ar[r]\ar[d]^{\cong} 
      & \sT(t_s,\Sigma u)\ar[r] 
      & \sT(u_s^0,\Sigma u)=0 \\
    \Hom_{\mod\,\sS}( F(u_s^0) , F(u) ) \ar[r]_-{f^*}
      & \Hom_{\mod\,\sS}(F(s),F(u))
           }
\]
By Remark~\ref{rmk:blanket-general}(iii), the map $F(-)$ is surjective. 
Because $f$ is a left
$F(\sU)$-approximatiom, $f^*$ is also surjective. So $\alpha^*$ is surjective too,
implying that $\sT(t_s,\Sigma u)=0$.  On the other hand, applying
$\sT(u,-)$ to the triangle \eqref{equation:triangle-approxi}, we
obtain an exact sequence
\[
\sT(u,\Sigma u_s^0)\rightarrow \sT(u,\Sigma t_s)\rightarrow \sT(u,\Sigma^2 s).
\] 
The two outer terms are trivial, hence so is the middle
term. Moreover, if $s' \in \sS$ then applying $\sT(t_{s'},-)$ to the
triangle \eqref{equation:triangle-approxi} gives an exact sequence
\[
\sT(t_{s'},\Sigma u_s^0)\rightarrow \sT(t_{s'},\Sigma t_s)\rightarrow \sT(t_{s'},\Sigma^2 s).
\]
The two outer terms are trivial, hence so is the middle term. It
follows that $\widetilde{\sU}$ is presilting. It is then silting
because it generates $\sS$.

Thirdly, $F( \widetilde{\sU} ) = F( \sU )$ because $F( t_s ) \cong F(
u_s^1 )$.

Finally, $\sS \cap \Sigma^{-1}\widetilde{\sU} = \sE$. This is because
$\sS \cap \Sigma^{-1}\widetilde{\sU} \supseteq \sS \cap \Sigma^{-1}\sU
= \sE$ and $\sS \cap \Sigma^{-1}\widetilde{\sU} \subseteq \Ker
F(\sU)=\sE$.
\end{proof}

\begin{Theorem}
\label{silting and support tau tilting}
Assume that each object of $\sS*\Sigma\sS$ can be written as the
direct sum of indecomposable objects which are unique up to
isomorphism.  Then the maps $\Phi$ and $\Psi$ defined in Theorem~\ref{thm:main-general} are bijective.
\end{Theorem}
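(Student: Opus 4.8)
The plan is to show that, under the Krull--Schmidt-type hypothesis on $\sS*\Sigma\sS$, the surjections $\Phi$ and $\Psi$ from Theorem~\ref{thm:main-general} are injective, since surjectivity is already established there. Thus it suffices to prove that a presilting (resp.\ silting) subcategory $\sU \subseteq \sS*\Sigma\sS$ is recovered from the pair $\Phi(\sU) = (F(\sU), \sS\cap\Sigma^{-1}\sU)$. The key structural fact I would isolate first is this: under the unique-direct-sum-decomposition hypothesis, every object $u$ of $\sU$ decomposes as $u \cong u' \oplus \Sigma e$, where $u'$ has no nonzero direct summand in $\Sigma\sS$ and $e \in \sS$. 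Indeed, writing $u$ as a sum of indecomposables and collecting those lying in $\Sigma\sS$, the summand $\Sigma e$ is the maximal direct summand of $u$ contained in $\Sigma\sS$; uniqueness of the decomposition makes this well defined.

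Next I would analyze the two pieces separately. For the $\Sigma\sS$-part: if $\Sigma e$ is a direct summand of some $u \in \sU$ with $e \in \sS$, then $e \in \sS\cap\Sigma^{-1}\sU$, so the subcategory $\add\{\Sigma e : e \in \sS\cap\Sigma^{-1}\sU\} = \Sigma(\sS\cap\Sigma^{-1}\sU)$ is determined by the pair, and every $\Sigma\sS$-summand occurring in $\sU$ lies in it; conversely $\Sigma(\sS\cap\Sigma^{-1}\sU) \subseteq \sU$ by definition of the second component. For the part with no $\Sigma\sS$-summand: here I invoke the equivalence \eqref{equ:mod_equivalence}, namely $F$ induces $(\sS*\Sigma\sS)/[\Sigma\sS] \xrightarrow{\sim} \mod\,\sS$. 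If $u, v \in \sS*\Sigma\sS$ both have no nonzero direct summand in $\Sigma\sS$ and $F(u) \cong F(v)$, then $u \cong v$ in the quotient category, and a standard lifting argument (an isomorphism in $(\sS*\Sigma\sS)/[\Sigma\sS]$ between objects with no $\Sigma\sS$-summands lifts to an isomorphism in $\sS*\Sigma\sS$, using that the two-sided ideal $[\Sigma\sS]$ consists of morphisms factoring through $\Sigma\sS$ and that such objects are, in the Krull--Schmidt setting, "minimal") shows $u \cong v$ in $\sT$. Combining the two analyses: $\sU$ equals the additive closure of $\{\,u' : u \in \sU\,\} \cup \Sigma(\sS\cap\Sigma^{-1}\sU)$, and the first set is recovered, object by object, from $F(\sU)$ via the equivalence. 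Hence $\sU$ is determined by $\Phi(\sU)$, giving injectivity of $\Phi$; restricting to the silting subcategories gives injectivity of $\Psi$, and both maps are therefore bijections.

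I expect the main obstacle to be the lifting argument in the middle step: one must argue carefully that an abstract isomorphism $F(u) \cong F(v)$ in $\mod\,\sS$ between the images of two objects with no $\Sigma\sS$-direct-summands forces an honest isomorphism $u \cong v$ in $\sS*\Sigma\sS$, not merely in the quotient $(\sS*\Sigma\sS)/[\Sigma\sS]$. The clean way to handle this is to note that any object $w \in \sS*\Sigma\sS$ can be written as $w_{\min} \oplus \Sigma e$ with $w_{\min}$ having no $\Sigma\sS$-summand, that $w_{\min}$ is uniquely determined up to isomorphism by the hypothesis, and that $F(w) \cong F(w_{\min})$; so "$F$ is injective on isomorphism classes of objects with no $\Sigma\sS$-summand" follows from faithfulness-on-objects of the equivalence \eqref{equ:mod_equivalence} combined with the observation that two such objects isomorphic in the quotient have, up to adding $\Sigma\sS$-summands, isomorphic representatives — and adding $\Sigma\sS$-summands to a $\Sigma\sS$-summand-free object and getting another $\Sigma\sS$-summand-free object forces the added summands to be zero, by uniqueness of decomposition. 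Everything else (surjectivity, Property (S) bookkeeping) is already in place from Theorem~\ref{thm:main-general}.
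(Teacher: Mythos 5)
Your proposal is correct and follows essentially the same route as the paper: reduce to injectivity of $\Phi$, split each object (equivalently, each subcategory $\sU$) into its $\Sigma\sS$-summand-free part and its part in $\Sigma\sS$, recover the former from $F(\sU)$ via the equivalence $(\sS*\Sigma\sS)/[\Sigma\sS]\simeq\mod\,\sS$ together with uniqueness of decompositions, and the latter from $\sS\cap\Sigma^{-1}\sU$. The lifting point you flag (an isomorphism $F(u)\cong F(v)$ only gives $u\oplus\Sigma s\cong v\oplus\Sigma s'$, and uniqueness of indecomposable decompositions then forces $u\cong v$ when neither has $\Sigma\sS$-summands) is exactly how the paper argues as well.
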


\begin{proof}
It suffices to show the injectivity of $\Phi$.

By Remark \ref{rmk:blanket-general}(iii), when we apply the functor
$F : \sS * \Sigma \sS \rightarrow \mod\, \sS$, we are in effect
forgetting the indecomposable direct summands which are in $\Sigma
\sS$.  So if $F( u ) \cong F( u' )$ for $u, u' \in \sS * \Sigma\sS$,
then there is an isomorphism $u \oplus \Sigma s \cong u' \oplus \Sigma
s'$ for some $s, s' \in \sS$. By the assumption in the theorem, if we
assume that $u$ and $u'$ do not have direct summands in $\Sigma\sS$,
then $u \cong u'$.

Now let $\sU$ and $\sU'$ be two presilting subcategories of $\sT$
contained in $\sS*\Sigma\sS$ such that $\Phi(\sU)=\Phi(\sU')$. Let
$\sU_1$ and $\sU'_1$ be respectively the full subcategories of $\sU$
and $\sU'$ consisting of objects without direct summands in
$\Sigma\sS$. Then $\sU=\sU_1\oplus (\sU\cap\Sigma\sS)$ and
$\sU'=\sU'_1\oplus (\sU'\cap\Sigma\sS)$. Since $\Phi(\sU)=\Phi(\sU')$,
it follows that $F(\sU_1)=F(\sU'_1)$ and
$\sU\cap\Sigma\sS=\sU'\cap\Sigma\sS$. The first equality, by the above
argument, implies that $\sU_1=\sU'_1$. Therefore $\sU=\sU'$, which
shows the injectivity of $\Phi$.
\end{proof}

\section{The Hom-finite Krull--Schmidt silting object case}

In this section, $\Bk$ is a commutative ring, $\sT$ is a triangulated
category which is essentially small, Krull--Schmidt, $\Bk$-linear and Hom-finite,
and $s \in \sT$ is a basic silting object.

We write $E = \sT( s,s )$ for the endomorphism ring and $\sS = \add( s
)$ for the associated silting subcategory.

\begin{Remark}
\label{rmk:blanket}
\begin{itemize}
  \item[(i)]  We write $\Mod\, E$ for the abelian category of
  right $E$-modules, $\mod\, E$ for the full subcategory of finitely
  presented modules, and $\prj\, E$ for the full subcategory of
  finitely generated projective modules.

\smallskip
  \item[(ii)]  Since $s$ is an additive generator of $\sS$, there is an
    equivalence 
\[
 G\; :\; \Mod\, \sS \stackrel{\sim}{\rightarrow} \Mod\, E
  \;\; , \;\;
  M \mapsto M(s)
\]
which restricts to an equivalence
\[
  \mod\, \sS \stackrel{\sim}{\rightarrow} \mod\, E
  \;\; , \;\;
  M \mapsto M(s).
\]
This permits us to move freely between the ``$E$-picture'' and the
``$\sS$-picture'' which was used in the previous section.

\smallskip

  \item[(iii)] The restricted Yoneda functor $F$ from the
    $\sS$-picture corresponds to the functor
\[
  \sT \rightarrow \Mod\, E
  \;\; , \;\;
  t \mapsto \sT( s,t )
\]
in the $E$-picture.

\smallskip



\smallskip

  \item[(iv)] By \cite[prop.\ 2.2(e)]{A} the functor $t \mapsto \sT(
    s,t )$ from (iii) restricts to an equivalence
\[
Y\; : \;  \sS \stackrel{\sim}{\rightarrow} \prj\, E.
\]

\smallskip
\noindent
Since $\sS = \add( s )$ is closed under direct sums and summands, it
is Krull--Schmidt, and it follows that so is $\prj\, E$.


\smallskip

  \item[(v)] By Remark \ref{rmk:blanket-general}(iii) the functor $t
    \mapsto \sT( s,t )$ from (iii) induces an equivalence
\begin{equation}
\label{equ:mod_equivalence2}
  ( \sS * \Sigma \sS ) / [ \Sigma \sS ]
  \stackrel{\sim}{\rightarrow} \mod\, E.
\end{equation}

\smallskip
\noindent
Since $\sS * \Sigma \sS$ is obviously closed under direct sums, and
under direct summands by Lemma~\ref{l:2-co-heart}, it is
Krull--Schmidt.  Hence so is $( \sS * \Sigma \sS ) / [ \Sigma \sS ]$
and it follows that so is $\mod\, E$.


\smallskip

  \item[(vi)] The additive category $\prj\, E$ is Krull--Schmidt by part
  (iv) and has additive generator $E_E$.  The same is hence true for
  $(\prj\, E)/[\add\, eE]$ for each idempotent $e \in E$.  It is not
  hard to check that the endomorphism ring of $E_E$ in $(\prj\,
  E)/[\add\, eE]$ is $E / EeE$, so there is an equivalence of categories
\[
  (\prj\, E)/[\add\, eE] \stackrel{ \sim }{ \rightarrow }
  \prj\, (E / EeE).
\]
In particular, $\prj\, (E / EeE)$ is Krull--Schmidt.
\end{itemize}
\end{Remark}

The following result is essentially already in \cite[prop.\ 2.16]{Ai},
\cite[start of sec.\ 5]{DF}, and \cite[prop.\ 6.1]{Wei}, all of which
give triangulated versions of Bongartz's classic proof.

\begin{Lemma}
[Bongartz Completion]
\label{lem:Bongartz}
Let $u \in \sS * \Sigma \sS$ be a presilting object.  Then there
exists an object $u^{ \prime } \in \sS * \Sigma \sS$ such that $u
\oplus u^{ \prime }$ is a silting object.
\end{Lemma}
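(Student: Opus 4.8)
The plan is to mimic Bongartz's classical argument in the triangulated setting, using the two-term structure crucially. Let $u \in \sS * \Sigma\sS$ be presilting, so by Remark~\ref{rmk:blanket-general}(iii) it corresponds to a module $M = F(u) \in \mod\, E$ (or $\mod\, \sS$), and by Lemma~\ref{lem:S-general} the associated projective presentation $P_1 \xrightarrow{\pi} P_0 \to M \to 0$ has Property (S), i.e. $\Ext^1$-type vanishing against $u$. First I would pick a distinguished triangle $s_1 \xrightarrow{f} s_0 \to u \to \Sigma s_1$ with $s_0, s_1 \in \sS$, where $f$ corresponds to $\pi$ under the equivalence $Y : \sS \xrightarrow{\sim} \prj\, E$. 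The natural candidate for the Bongartz complement is built from $s$ itself: consider the $E$-module extension group $\Ext^1_{\mod\, E}(M, s)$ — more precisely, since $s$ is a silting object and $\sS = \add(s)$, I want a triangle realizing a ``universal'' extension of $\Sigma u$ by copies of $s$.

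The key steps, in order, would be: (1) Set $V = \sT(u, \Sigma s)$, a finitely generated $E$-module (using Hom-finiteness and that $\Bk$ is... actually $\Bk$ is just a commutative ring here, but Hom-finiteness over $\Bk$ plus the structure suffices to make $V$ a finitely presented $E$-module — this needs a small check). Pick a finite generating set of $V$, equivalently a surjection $\sT(u, \Sigma s)^{?}$ realized by a single morphism $u^{\oplus} \to \Sigma s^{\oplus}$... cleaner: form the triangle $s^{\oplus n} \to u' \to u \xrightarrow{\gamma} \Sigma s^{\oplus n}$ where $\gamma$ is chosen so that $\sT(-, s)$ applied to it, or rather the connecting behavior, kills $\sT(u, \Sigma s)$. (2) Check $u' \in \sS * \Sigma\sS$: since $s^{\oplus n} \in \sS$ and $u \in \sS * \Sigma\sS = \Sigma\sA \cap \Sigma^{-1}\sB$ by Lemma~\ref{l:2-co-heart}, and this subcategory is extension closed (also by Lemma~\ref{l:2-co-heart}), the extension $u'$ of $u$ by $s^{\oplus n}$ lies in it. (3) Verify $u \oplus u' \oplus s^{\oplus n}$ — or rather $u \oplus u'$ after absorbing $s$ — is presilting: one checks $\sT(u', \Sigma^{\geq 1} u') = 0$, $\sT(u', \Sigma^{\geq 1} u) = 0$, $\sT(u, \Sigma^{\geq 1} u') = 0$ by applying $\sT(u, -)$, $\sT(u', -)$, $\sT(-, u)$, $\sT(-, u')$ to the defining triangle of $u'$ and the defining triangle of $u$, using $\sT(u, \Sigma^{\geq 2}(-))$ vanishing on $\sS*\Sigma\sS$ (equation~\eqref{equ:Ext_geq_2_vanishing_general} in the proof of Lemma~\ref{lem:S-general}), presilting-ness of $u$, and the universal choice of $\gamma$ which forces $\sT(u, \Sigma u') \to \sT(u, \Sigma^2 s^{\oplus n})$ to have the right image, i.e. $\sT(u, \Sigma u') = 0$. (4) Finally argue $\thick(u \oplus u') = \sT$: since $s \in \thick(s^{\oplus n}, \text{cone stuff})$... actually $s^{\oplus n}$ sits in the triangle defining $u'$ together with $u$ and $u'$, so $s^{\oplus n} \in \thick(u \oplus u')$, hence $s \in \thick(u \oplus u')$ by idempotent completeness (Krull--Schmidt), hence $\thick(u \oplus u') \supseteq \thick(s) = \sT$.

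The main obstacle I anticipate is step (3), specifically getting the vanishing $\sT(u, \Sigma u') = 0$: this is exactly where the Bongartz extension has to be chosen ``universally,'' and in the triangulated world one must verify that the long exact sequence obtained from the triangle $s^{\oplus n} \to u' \to u \xrightarrow{\gamma} \Sigma s^{\oplus n}$ after applying $\sT(u, -)$ has the connecting map $\sT(u, u) \to \sT(u, \Sigma s^{\oplus n})$... wait, rather $\sT(u,\Sigma s^{\oplus n}) \to \sT(u, \Sigma u')$ zero, equivalently $\sT(u, u) \xrightarrow{\gamma_*} \sT(u, \Sigma s^{\oplus n})$ surjective. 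So $\gamma$ must be chosen so that $\gamma \circ (-) : \sT(u,u) \to \sT(u, \Sigma s^{\oplus n})$ is onto; taking $n = $ size of a generating set of $\sT(u, \Sigma s)$ as a module over $E = \sT(s,s)$ and $\gamma = (\gamma_1, \dots, \gamma_n)$ a lift of generators does the job, but one must be careful that $\sT(u, \Sigma s)$ is finitely generated over $E$ — which follows since it is a finitely generated $\Bk$-module (Hom-finite) and $\Bk$ is... not necessarily noetherian, but finite generation as a $\Bk$-module already implies finite generation as an $E$-module (any $\Bk$-generating set is an $E$-generating set). That resolves the subtlety. The alternative, cleaner route avoiding all of this: invoke \cite[Theorem 2.18]{AI} or the references cited just before the lemma (\cite[prop.\ 2.16]{Ai}, \cite[start of sec.\ 5]{DF}, \cite[prop.\ 6.1]{Wei}) to get a presilting complement $u''$ with $u \oplus u''$ silting in $\sT$, and then replace $u''$ by its ``two-term part'' — but this requires knowing the complement can be taken in $\sS * \Sigma\sS$, which is precisely the content to be proven, so the direct Bongartz construction above is the honest approach.
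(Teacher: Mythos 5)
Your overall plan (a universal-extension triangle in the spirit of Bongartz, then checking presilting-ness and thickness) is the right one, and your steps (2) and (4) are fine, but the universal triangle is set up backwards and this breaks the one vanishing that the whole construction hinges on. You form $s^{\oplus n}\to u'\to u\xrightarrow{\gamma}\Sigma s^{\oplus n}$ and, as you correctly identify, you need $\sT(u,\Sigma u')=0$, equivalently surjectivity of $\gamma\circ(-)\colon\sT(u,u)\to\sT(u,\Sigma s^{\oplus n})$. But the image of $\gamma\circ(-)$ is the cyclic right $\End_{\sT}(u)$-submodule of $\sT(u,\Sigma s)^{\oplus n}$ generated by $\gamma$; choosing the components $\gamma_i$ to generate $\sT(u,\Sigma s)$ over $E=\sT(s,s)$ (which acts by \emph{post}composition) is irrelevant to this. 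In fact no choice of $\gamma$ can work in general: if $u$ is indecomposable (so $\End_{\sT}(u)$ is local, $\sT$ being Krull--Schmidt) and $\sT(u,\Sigma s)\neq 0$ with $n\geq 2$, then a cyclic right $\End_{\sT}(u)$-module has simple top while $\sT(u,\Sigma s)^{\oplus n}$ does not, so $\gamma\circ(-)$ is never onto. The multiplicity has to sit on $u$, not on $s$: the relevant module structure is $\sT(u,\Sigma s)$ as a right $\End_{\sT}(u)$-module via \emph{pre}composition.

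The paper's proof does exactly this. Hom-finiteness over $\Bk$ makes $\sT(u,\Sigma s)$ a finitely generated right $\End_{\sT}(u)$-module, so there is a right $\add(u)$-approximation $u_0\to\Sigma s$ (concretely $u_0=u^{\oplus n}$ with components generating $\sT(u,\Sigma s)$ over $\End_{\sT}(u)$); complete it to a triangle $s\to u'\to u_0\to\Sigma s$ and set the complement to be $u'$. Applying $\sT(u,-)$, the approximation property says precisely that $\sT(u,u_0)\to\sT(u,\Sigma s)$ is onto, whence $\sT(u,\Sigma u')=0$; the remaining vanishings $\sT(u,\Sigma^{\geq 2}u')=\sT(u',\Sigma^{\geq 1}u)=\sT(u',\Sigma^{\geq 1}u')=0$ follow from two-term-ness just as in your step (3), while $u'\in\sS*\Sigma\sS$ by extension-closedness (Lemma~\ref{l:2-co-heart}) and $s\in\thick(u\oplus u')$ as in your step (4). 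So the fix is to replace your triangle by this approximation triangle; your closing remark is right that quoting the completion results of Aihara, Derksen--Fei or Wei without controlling the two-term condition would be circular, which is why this direct construction is the one the paper uses.
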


\begin{proof}
This has essentially the same proof as classic Bongartz completion:
Since $\sT$ is $\Hom$-finite over the commutative ring $\Bk$, there is
a right $\add( u )$-approximation $u_0 \rightarrow \Sigma s$.
It gives a distinguished triangle $s \rightarrow u^{ \prime }
\rightarrow u_0 \rightarrow\Sigma s$, and it is straightforward to check that $u^{ \prime
}$ has the desired properties.
\end{proof}

The following result is essentially already in \cite[thm.\ 5.4]{DF}.

\begin{Proposition}
\label{pro:silting_criterion}
Let $u \in \sS * \Sigma \sS$ be a basic presilting object.  Then 
\[
  \mbox{ $u$ is a silting object }
  \Leftrightarrow \;
  \#_{ \sT }( u ) = \#_{ \sT }( s ).
\]
\end{Proposition}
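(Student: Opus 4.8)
The plan is to prove both implications by exploiting the equivalence $(\sS*\Sigma\sS)/[\Sigma\sS]\xrightarrow{\sim}\mod E$ from Remark~\ref{rmk:blanket}(v) together with the two silting-mutation/completion facts available, namely Lemma~\ref{lem:Bongartz} (Bongartz completion inside $\sS*\Sigma\sS$) and \cite[Theorem 2.18]{AI} (a presilting summand-closure/maximality statement, used already in the proof of Theorem~\ref{thm:main-general}(c)). The forward direction ($\Rightarrow$) is the easy one: if $u$ is silting, then the functor $F=\sT(s,-)$ sends $u$ to a support $\tau$-tilting $E$-module by Theorem~\ref{thm:main-general}, and one compares counts of indecomposables. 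Concretely, write the triangle $s_1^u\to s_0^u\to u\to\Sigma s_1^u$; applying $F$ gives a minimal projective presentation $P_1\to P_0\to F(u)\to 0$, and the $\tau$-tilting property forces $\#_{\mod E}(F(u))=\#_{\prj(E/EeE)}(E/EeE)$ where $e$ is the idempotent cutting out $\sS\cap\Sigma^{-1}\add(u)$. Counting indecomposable summands of $u$: those lying in $\Sigma\sS$ are exactly $\Sigma$ of the simple projectives killed by $e$ (there are $\#(s)-\#(E/EeE)$ of them), and those not in $\Sigma\sS$ biject under $F$ with the indecomposable summands of $F(u)$, of which there are $\#_{\mod E}(F(u))=\#(E/EeE)$. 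Adding, $\#_\sT(u)=\#(s)$.

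For the reverse direction ($\Leftarrow$), suppose $u\in\sS*\Sigma\sS$ is basic presilting with $\#_\sT(u)=\#_\sT(s)$. By Bongartz completion (Lemma~\ref{lem:Bongartz}) there is $u'\in\sS*\Sigma\sS$ with $u\oplus u'$ silting; we may take $u\oplus u'$ basic. Now I would run the $\Rightarrow$ direction, already proved, on the silting object $u\oplus u'$: it gives $\#_\sT(u\oplus u')=\#_\sT(s)$. Since $u$ and $u'$ are themselves basic and $u\oplus u'$ is basic, $\#_\sT(u\oplus u')\geq\#_\sT(u)$ with equality iff every indecomposable summand of $u'$ is already a summand of $u$, i.e.\ iff $\add(u')\subseteq\add(u)$, i.e.\ iff $u\oplus u'\in\add(u)$. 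But $\#_\sT(u\oplus u')=\#_\sT(s)=\#_\sT(u)$ by hypothesis, so indeed $\add(u\oplus u')=\add(u)$, and therefore $u$ is silting because $u\oplus u'$ is.

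The one point that needs a little care — and the main potential obstacle — is the count of indecomposable summands of $u$ that lie in $\Sigma\sS$ in the $\Rightarrow$ direction, and showing that $e$ is precisely the idempotent appearing in Definition~\ref{def:support_tau-tilting}(ii) for the module $F(u)$. For this I would argue: the summands of $u$ in $\Sigma\sS$ are exactly $\Sigma s''$ where $s''\in\sS$ satisfies $\sT(s'',u)=0$, i.e.\ $s''\in\sS\cap\Sigma^{-1}\add(u)=\Ker F(\add(u))$ (the equality established in the proof of Theorem~\ref{thm:main-general}(c)); under the equivalence $Y:\sS\xrightarrow{\sim}\prj E$ of Remark~\ref{rmk:blanket}(iv) these correspond to the indecomposable projectives $eE$ annihilated by $F(u)$, so there are exactly $\#(s)-\#_{\prj(E/EeE)}(E/EeE)$ of them. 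The remaining summands of $u$ map bijectively (preserving indecomposability, by the assumption that the Krull--Schmidt decompositions in $\sS*\Sigma\sS$ behave well, which holds here by Remark~\ref{rmk:blanket}(v)) onto the indecomposable summands of $F(u)$ in $\mod E$, giving $\#_{\mod E}(F(u))$ of them. The support $\tau$-tilting condition $\#_{\mod E}(F(u))=\#_{\prj(E/EeE)}(E/EeE)$ then yields $\#_\sT(u)=\#(s)$ exactly. Alternatively, and perhaps more cleanly, one can bypass the idempotent bookkeeping by noting that $\Psi$ is a bijection (Theorem~\ref{silting and support tau tilting}) and invoking directly the known count $\#_{\mod E}(\text{s}\tau\text{-tilting})+\#(\text{killed projectives})=\#(s)$ for support $\tau$-tilting modules, which in the finite-dimensional case is \cite[prop.\ 2.3]{AIR}; the essentially small generalisation is immediate from Definition~\ref{def:support_tau-tilting}(ii) itself.
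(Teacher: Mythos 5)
Your backward direction is fine and is essentially the paper's own argument: Bongartz completion (Lemma \ref{lem:Bongartz}) produces a basic silting object $u\oplus u'$, and the equality $\#_{\sT}(u\oplus u')=\#_{\sT}(s)=\#_{\sT}(u)$ forces $\add(u')\subseteq\add(u)$, so $u$ is silting. The problem is the forward direction, on which your backward direction also depends (you apply it to $u\oplus u'$): as written it is circular within this paper. Theorem \ref{thm:main-general} only tells you that $\big(F(\add(u)),\,\sS\cap\Sigma^{-1}\add(u)\big)$ is a support $\tau$-tilting \emph{pair} in the sense of Definition \ref{definition:support-tau-tilting}(iv), i.e.\ a kernel condition plus a left-approximation condition; that definition contains no numerical condition whatsoever. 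The count $\#_{\mod\, E}(F(u))=\#_{\prj\,(E/EeE)}(E/EeE)$ you then invoke belongs to the ring-theoretic Definition \ref{def:support_tau-tilting}(ii), and the assertion that the pair produced by Theorem \ref{thm:main-general} satisfies this count is, after your (correct) bookkeeping of the summands in $\Sigma\sS$, literally equivalent to the equality $\#_{\sT}(u)=\#_{\sT}(s)$ you are trying to prove. In the paper this passage between the two definitions is exactly the content of Theorem \ref{thm:main}, whose proof cites Proposition \ref{pro:silting_criterion}. Your proposed shortcut at the end has the same defect: \cite[prop.\ 2.3]{AIR} is a theorem about finite-dimensional algebras, not a definition, and saying the general case is ``immediate from Definition \ref{def:support_tau-tilting}(ii) itself'' conflates the categorical approximation-style definition (which is what Theorems \ref{thm:main-general} and \ref{silting and support tau tilting} give you) with the numerical one; nothing available before this proposition bridges them.

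The missing ingredient is \cite[thm.\ 2.27]{AI}, which is what the paper's one-line proof appeals to for both implications: in a Krull--Schmidt triangulated category the number of pairwise non-isomorphic indecomposable summands of a basic silting object is independent of the silting object (in \cite{AI} this follows from the fact that the indecomposable summands of a silting object give a basis of the Grothendieck group). Granting that, the implication $\Rightarrow$ is immediate, since $u$ and $s$ are both basic silting objects, and your Bongartz argument then gives $\Leftarrow$ exactly as in the paper. So keep your second paragraph, but replace the $\tau$-tilting counting argument of the first by an appeal to this invariance; deriving the invariance from $\tau$-tilting counts is not an option here, because in this paper those counts are established only later and only by means of this very proposition.
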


\begin{proof}
The implication $\Rightarrow$ is immediate from \cite[thm.\
2.27]{AI} and $\Leftarrow$ is a straightforward consequence of
\cite[thm.\ 2.27]{AI} and Lemma \ref{lem:Bongartz}.
\end{proof}

As a consequence, we have

\begin{Corollary}
\label{cor:presilting-cat-has-generator}
Let $\sU$ be a  
presilting subcategory of $\sT$ contained in $\sS*\Sigma\sS$. Then there exists $u\in\sU$ such that $\sU=\add(u)$.
\end{Corollary}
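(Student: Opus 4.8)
The plan is to show that a presilting subcategory $\sU$ contained in $\sS * \Sigma\sS$ has only finitely many indecomposables up to isomorphism, and then take $u$ to be their direct sum. The key constraint is that $\sS * \Sigma\sS$ is Krull--Schmidt and Hom-finite (Remark \ref{rmk:blanket}(v)), so every object of $\sU$ decomposes into indecomposables with local endomorphism rings, and two objects of $\sU$ are isomorphic iff they have the same indecomposable summands with the same multiplicities. Thus it suffices to bound the number of non-isomorphic indecomposable objects of $\sU$.

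The main step is a cardinality bound via Proposition \ref{pro:silting_criterion} and Bongartz completion (Lemma \ref{lem:Bongartz}). First I would pass to basic objects: if $u_1, \dots, u_n \in \sU$ are pairwise non-isomorphic indecomposables, then $v = u_1 \oplus \cdots \oplus u_n$ is a basic presilting object in $\sS * \Sigma\sS$, so $\#_\sT(v) = n$. By Bongartz completion there is $v' \in \sS * \Sigma\sS$ with $v \oplus v'$ silting; then by \cite[thm.\ 2.27]{AI} (or directly by Proposition \ref{pro:silting_criterion} applied to the basic object underlying $v \oplus v'$), $\#_\sT(v \oplus v') = \#_\sT(s)$, and since $v$ is a direct summand, $n = \#_\sT(v) \le \#_\sT(s)$. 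Hence $\sU$ has at most $\#_\sT(s)$ non-isomorphic indecomposable objects.

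Having fixed that bound, let $u \in \sU$ be the direct sum of a complete set of representatives of the isomorphism classes of indecomposables occurring in $\sU$; this is a finite direct sum, hence an honest object of $\sT$, and it lies in $\sU$ because $\sU$ is closed under finite direct sums. Any object $w \in \sU$ decomposes as a finite direct sum of indecomposables, each isomorphic to a summand of $u$, so $w \in \add(u)$; conversely $\add(u) \subseteq \sU$ since $\sU$ is closed under direct sums and summands. Therefore $\sU = \add(u)$.

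The only point requiring a little care is the reduction to basic objects when invoking \cite[thm.\ 2.27]{AI} or Proposition \ref{pro:silting_criterion}: these are stated for basic presilting objects, and $v \oplus v'$ need not be basic, so one applies the statement to the basic object $\overline{v \oplus v'}$ with $\add(\overline{v \oplus v'}) = \add(v \oplus v')$, using that the number of non-isomorphic indecomposable summands is an invariant of $\add(-)$. This is the main (minor) obstacle; everything else is bookkeeping with Krull--Schmidt decompositions.
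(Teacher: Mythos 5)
Your proposal is correct and follows essentially the same route as the paper's proof: both use Bongartz completion (Lemma \ref{lem:Bongartz}) together with the counting criterion of Proposition \ref{pro:silting_criterion} to bound the number of isomorphism classes of indecomposables in $\sU$ by $\#_{\sT}(s)$ (the paper phrases this as a contradiction starting from $\#_{\sT}(s)+1$ indecomposables, you phrase it as a direct bound), and then conclude via the Krull--Schmidt property; your remark about passing to the basic object underlying $v\oplus v'$ is a sensible precision of a point the paper glosses over.
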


\begin{proof}
Suppose on the contrary that $\sU\neq \add(u)$ for each $u \in
\sU$. Then $\sU$ contains infinitely many isomorphism classes of
indecomposable objects. In particular, there is a basic presilting
object $u \in \sU$ such that $\#_\sT (u) = \#_\sT (s)+1$. 
By Lemma \ref{lem:Bongartz}, there is an object $u'\in\sT$ such that $u\oplus u'$ is a basic silting object of
$\sT$.  Therefore, $\#_\sT(s)+1=\#_\sT(u)\leq \#_\sT(u\oplus u')=\#_\sT(s)$, a contradiction. Here the last equality follows from Proposition
\ref{pro:silting_criterion}.
\end{proof}

Theorem \ref{thm:main-general} in the current
setting combined with Corollary~\ref{cor:presilting-cat-has-generator}
immediately yields the following result.  For an object $u$ of $\sS*\Sigma\sS$, let $\Sigma u_1$ be
its maximal direct summand in $\Sigma\sS$.

\begin{Theorem}
\label{theorem:object-case-pairs}
The assignment
\[
  u \mapsto \big( \add(F(u)) , \add(u_1) \big)
\]
defines a bijection from the first to the second of the following
sets.
\begin{itemize}

  \item[(i)]  Basic presilting objects of $\sT$ which are in
    $\sS*\Sigma\sS$, modulo isomorphism.

\smallskip

  \item[(ii)] $\tau$-rigid pairs of $\mod\, \sS$.

\end{itemize}
It restricts to a bijection from the first to the second of the following sets.
\begin{itemize}

  \item[(iii)]  Basic silting objects of $\sT$ which are in
    $\sS*\Sigma\sS$, modulo isomorphism.

\smallskip

\item[(iv)]  Support $\tau$-tilting pairs of $\mod\,
  \sS$. 
\end{itemize}
\end{Theorem}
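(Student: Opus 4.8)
The plan is to deduce this directly from Theorem~\ref{thm:main-general}, Theorem~\ref{silting and support tau tilting}, and Corollary~\ref{cor:presilting-cat-has-generator}, so that the work is essentially bookkeeping. First I would observe that in the present setting the hypothesis of Theorem~\ref{silting and support tau tilting} is satisfied: by Remark~\ref{rmk:blanket}(v) the category $\sS*\Sigma\sS$ is Krull--Schmidt, so each of its objects is a direct sum of indecomposable objects which are unique up to isomorphism. Hence the maps $\Phi$ and $\Psi$ of Theorem~\ref{thm:main-general} are \emph{bijections} from presilting (resp.\ silting) subcategories of $\sT$ contained in $\sS*\Sigma\sS$ onto $\tau$-rigid pairs (resp.\ support $\tau$-tilting pairs) of $\mod\,\sS$.

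Next I would upgrade the source side from subcategories to objects. By Corollary~\ref{cor:presilting-cat-has-generator}, every presilting subcategory $\sU$ of $\sT$ contained in $\sS*\Sigma\sS$ has the form $\sU=\add(u)$ for some $u\in\sU$; since $\sS*\Sigma\sS$ is Krull--Schmidt we may take $u$ basic, and such a basic $u$ is then unique up to isomorphism, because in a Krull--Schmidt category $\add(u)=\add(u')$ for basic $u,u'$ forces $u\cong u'$. Conversely $\add(u)$ is a presilting subcategory whenever $u$ is a basic presilting object of $\sT$ lying in $\sS*\Sigma\sS$. Thus $u\mapsto\add(u)$ is a bijection from the set of basic presilting objects of $\sT$ in $\sS*\Sigma\sS$ modulo isomorphism onto the set of presilting subcategories of $\sT$ contained in $\sS*\Sigma\sS$, and it restricts to the analogous bijection on the silting level (a silting object is in particular presilting, and the Corollary applies verbatim).

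Finally I would compose these two bijections and unwind the formula. For a basic $u\in\sS*\Sigma\sS$ we have $\Phi(\add(u))=\big(F(\add(u)),\,\sS\cap\Sigma^{-1}\add(u)\big)$; here $F(\add(u))=\add(F(u))$ since $F$ is additive, and writing $\Sigma u_1$ for the maximal direct summand of $u$ lying in $\Sigma\sS$, an indecomposable direct summand $v$ of $u$ satisfies $\Sigma^{-1}v\in\sS$ precisely when $v$ is a summand of $\Sigma u_1$, so that $\sS\cap\Sigma^{-1}\add(u)=\add(u_1)$. Hence the composite bijection is exactly $u\mapsto\big(\add(F(u)),\add(u_1)\big)$, and the asserted restriction to basic silting objects versus support $\tau$-tilting pairs follows from the corresponding restriction of $\Psi$ together with the silting version of the object--subcategory bijection. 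The only genuinely non-formal inputs are Theorem~\ref{silting and support tau tilting} (injectivity of $\Phi$, via unique decomposition in $\sS*\Sigma\sS$) and Corollary~\ref{cor:presilting-cat-has-generator} (existence of an additive generator for a presilting subcategory); once the Krull--Schmidt property of $\sS*\Sigma\sS$ is in hand everything is routine, so I do not expect a real obstacle, the one point needing a little care being the identification of the second component of $\Phi(\add(u))$ with $\add(u_1)$.
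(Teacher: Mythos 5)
Your proposal is correct and follows essentially the same route as the paper, which deduces the theorem directly from Theorem~\ref{thm:main-general} (surjectivity of $\Phi$ and $\Psi$), the Krull--Schmidt property of $\sS*\Sigma\sS$ giving injectivity as in Theorem~\ref{silting and support tau tilting}, and Corollary~\ref{cor:presilting-cat-has-generator} to pass between presilting subcategories and basic presilting objects. Your explicit identification $\sS\cap\Sigma^{-1}\add(u)=\add(u_1)$ is exactly the unwinding the paper leaves implicit.
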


As a consequence, if $(\sM,\sE)$ is a $\tau$-rigid pair of
$\mod\,\sS$, then there is an $\sS$-module $M$ such that
$\sM=\add(M)$. 

Next we move to the $E$-picture.
Recall from Remark~\ref{rmk:blanket}(ii) and (iv) that there are equivalences
$G:\Mod\,\sS\stackrel{\sim}{\rightarrow}\Mod\, E$ and
$Y:\sS\stackrel{\sim}{\rightarrow}\prj\, E$.

\begin{Theorem}
\label{thm:main}
An $E$-module $U$ is a support $\tau$-tilting module if and
only if the pair
\[
  \big( G^{-1}(\add(U)) , Y^{-1}(\add(eE)) \big)
\]
is a support $\tau$-tilting pair of $\mod\, \sS$ for some idempotent
$e \in E$.

Consequently, the functor $\sT( s,- ) : \sT \rightarrow \Mod\, E$
induces a bijection from the first to the second of the following
sets.
\begin{enumerate}

  \item  Basic silting objects of $\sT$ which are in $\sS *
  \Sigma \sS$, modulo isomorphism.

\smallskip

  \item  Basic support $\tau$-tilting modules of $\mod\, E$, modulo
  isomorphism.

\end{enumerate}
\end{Theorem}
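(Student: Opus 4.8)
The plan is to deduce Theorem \ref{thm:main} by transporting Theorem \ref{theorem:object-case-pairs} across the equivalences $G : \Mod\,\sS \stackrel{\sim}{\rightarrow} \Mod\, E$ and $Y : \sS \stackrel{\sim}{\rightarrow} \prj\, E$ of Remark \ref{rmk:blanket}(ii) and (iv). First I would prove the ``if and only if'' statement. The essential point is to match up, on the $\sS$-side, the notion of support $\tau$-tilting \emph{pair} with, on the $E$-side, the notion of support $\tau$-tilting \emph{module}. So suppose $(\sM,\sE)$ is a support $\tau$-tilting pair of $\mod\,\sS$. By the remark following Theorem \ref{theorem:object-case-pairs} we may write $\sM = \add(M)$ for a single $\sS$-module $M$, and we may assume $M$ is basic; set $U := G(M)$, which is then a basic finitely presented $E$-module, and $\add(U) = G(\sM)$. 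Likewise $\sE = \add(e')$ for a basic object $e' \in \sS$ (it is a subcategory of the Krull--Schmidt category $\sS$ generated by finitely many indecomposables, since $\sE \subseteq \add(s)$), and under $Y$ the object $Ye'$ is a basic projective $E$-module, hence of the form $eE$ for an idempotent $e \in E$; note $\sE = Y^{-1}(\add(eE))$.

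Next I would check that the defining conditions correspond. The condition $\sM(\sE) = 0$, i.e. $m(e') = 0$ for all $m \in \sM$, translates under $G$ and $Y$ into $\Hom_E(eE, U) = 0$, that is $Ue = 0$. Under the $\tau$-rigid-pair translation of Lemma \ref{lem:S-general} (with $\sU = \add(u)$ and $u$ the basic presilting object corresponding to $(\sM,\sE)$ via Theorem \ref{theorem:object-case-pairs}), the existence of a class of projective presentations with Property (S) for $\sM$ becomes exactly the existence of a projective presentation $P_1 \xrightarrow{\pi} P_0 \to U \to 0$ with $\Hom_E(\pi,U)$ surjective, i.e. $U$ is $\tau$-rigid in the sense of Definition \ref{def:support_tau-tilting}(i); here one uses the equivalence $G : \mod\,\sS \xrightarrow{\sim} \mod\, E$ and the fact that $G$ sends projectives to projectives via $Y$. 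It remains to match the numerical/covering condition. On the pair side, support $\tau$-tilting means $\sE = \Ker(\sM)$ together with the left-$\sM$-approximation sequences $\sC(-,s) \to m^0 \to m^1 \to 0$; by Theorem \ref{theorem:object-case-pairs} this is equivalent to $u$ being a \emph{silting} object, which by Proposition \ref{pro:silting_criterion} is equivalent to $\#_\sT(u) = \#_\sT(s)$. Under $F$ (equivalently $\sT(s,-)$), the object $u$ has $\#$ equal to $\#_{\mod\,E}(U) + \#_{\prj\,E}(eE)$, while $\#_\sT(s) = \#_{\prj\,E}(E_E)$. Using the equivalence $(\prj\,E)/[\add\,eE] \xrightarrow{\sim} \prj\,(E/EeE)$ from Remark \ref{rmk:blanket}(vi), one gets $\#_{\prj\,E}(E_E) - \#_{\prj\,E}(eE) = \#_{\prj\,(E/EeE)}(E/EeE)$, so $\#_\sT(u) = \#_\sT(s)$ is equivalent to $\#_{\mod\,E}(U) = \#_{\prj\,(E/EeE)}(E/EeE)$ with $Ue = 0$, which is precisely Definition \ref{def:support_tau-tilting}(ii). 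This establishes the ``if and only if''.

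For the ``Consequently'' part, I would simply assemble the bijections: Theorem \ref{theorem:object-case-pairs}(iii)$\leftrightarrow$(iv) gives a bijection between basic silting objects in $\sS*\Sigma\sS$ (modulo isomorphism) and support $\tau$-tilting pairs of $\mod\,\sS$; the first half of the present theorem, together with the observation that two support $\tau$-tilting pairs $(\sM,\sE)$ and $(\sM',\sE')$ coincide iff $\sE = \Ker(\sM)$, $\sE' = \Ker(\sM')$ and $\sM = \sM'$, shows $(\sM,\sE) \mapsto G(M)$ (with $\sM = \add(M)$, $M$ basic) is a bijection onto basic support $\tau$-tilting $E$-modules modulo isomorphism. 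Composing, and tracking that the composite sends a basic silting object $u$ to $F(u)(s) = \sT(s,u)$, yields the stated bijection. The main obstacle I anticipate is the bookkeeping in the numerical step: one must carefully verify that passing to $\mod\,E$ via $F$ discards exactly the $\Sigma\sS$-summands of $u$ (Remark \ref{rmk:blanket-general}(iii)/\ref{rmk:blanket}(v)), that those discarded summands correspond under $Y$ to the projective $E$-module $eE$ determined by $\sE$, and that the rank identity for $E/EeE$ holds at the level of counting indecomposables — none of this is deep, but it is where the argument could go wrong if the identifications are not kept straight.
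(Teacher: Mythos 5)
Your proposal is correct and follows essentially the same route as the paper: it matches $\tau$-rigidity under $G$, translates $\sM(\sE)=0$ into $Ue=0$ via $\Hom_E(eE,U)\cong Ue$, and settles the support condition by the same chain Theorem \ref{theorem:object-case-pairs} $\Rightarrow$ Proposition \ref{pro:silting_criterion} $\Rightarrow$ the summand count in $\mod\,E$, $\prj\,E$ and $\prj\,(E/EeE)$ from Remark \ref{rmk:blanket}(iv)--(vi). The assembly of the ``Consequently'' bijection from Theorem \ref{theorem:object-case-pairs} is likewise what the paper intends.
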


\begin{proof}
We only prove the first assertion.
The proof is divided into three parts. Let $u_p \in \sS*\Sigma \sS$ be
such that $u_p$ has no direct summand in $\Sigma\sS$ and
$F( u_p ) = G^{-1}( U )$.

  (a) It is clear that $U$ is a $\tau$-rigid $E$-module if and only if
  $G^{-1}(\add(U))$ is a $\tau$-rigid subcategory of $\mod\,\sS$.
  
(b) Let $e$ be an idempotent of $E$ and let $u_1\in\sS$ be such that $Y(u_1)=eE$.
We have
\[
  \begin{array}{rclcl}
 Ue & \cong & \Hom_E( eE , U ) 
      & & \\[2mm]
      & = & \Hom_{ \Mod\, \sS } \big( \sS( -,u_1 ) , F( u_p ) \big)
      & &  \\[2mm]
      & \cong & F ( u_p ) ( u_1 )
      & & \mbox{ Remark \ref{rmk:blanket-general}(ii) } 
  \end{array}
\]
Therefore $Ue = 0$ if and only if $M( u' ) = 0$ for each $M \in
\add(F(u_p)) = G^{-1}(\add(U))$ and each $u' \in \add(u_1) =
Y^{-1}(\add(eE))$. 

(c) Suppose that $(G^{-1}(\add(U)),Y^{-1}(\add(eE)))$ is a
$\tau$-rigid pair. Let $u$ be the corresponding basic presilting
object of $\sT$ as in Theorem \ref{theorem:object-case-pairs}.
More
precisely, $u=u_p\oplus \Sigma u_1$, where $u_p$ and $u_1$ are as
above. Then
\[
  \begin{array}{llcl}
    \lefteqn{\big( G^{-1}(\add(U)) , Y^{-1}(\add(eE)) \big) \mbox{ is a support $\tau$-tilting pair}} &&\hspace{10ex}& \\[2mm]
    \Leftrightarrow & \mbox{ $u$ is a silting object}
      & & \mbox{Theorem \ref{theorem:object-case-pairs}} \\[2mm]
    \Leftrightarrow & \#_{\sT}(u)=\#_{\sT}(s)
      & & \mbox{Proposition \ref{pro:silting_criterion}} \\[2mm]
    \Leftrightarrow & \#_{ \sS * \Sigma \sS }( u ) = \#_{\sS}(s) &&\\[2mm]
    \Leftrightarrow & \#_{ \sS * \Sigma \sS }( u ) = \#_{ \prj\, E}( E )
      & & \mbox{Remark \ref{rmk:blanket}(iv)}\\[2mm]
    \Leftrightarrow & \#_{ (\sS * \Sigma \sS)/[\Sigma \sS] }( u ) + \#_{ \sS * \Sigma \sS }( \Sigma u_1 ) = \#_{\prj\, E}( E ) &&
     \\[2mm]
    \Leftrightarrow & \#_{ \mod\, E }( U ) + \#_{ \prj\, E }( eE ) = \#_{\prj\, E} (E)
      & & \mbox{Remark \ref{rmk:blanket}(iv+v)}\\[2mm]
    \Leftrightarrow & \#_{ \mod\, E }( U ) =  \#_{\prj\, E} (E) - \#_{ \prj\, E }( eE ) &&\\[2mm]
    \Leftrightarrow & \#_{ \mod\, E }( U ) =  \#_{(\prj\, E)/[\add\, eE]} (E)&&\\[2mm]
    \Leftrightarrow & \#_{ \mod\, E }( U ) =  \#_{\prj\, (E/EeE)}
     (E/EeE)&& \mbox{Remark \ref{rmk:blanket}(vi)}\\[2mm]
    \Leftrightarrow & U \mbox{ is a support $\tau$-tilting module. } 
  \end{array}
\]

\end{proof}

\section{Support $\tau$-tilting pairs and torsion classes}

In this section $\Bk$ is a commutative noetherian local ring
and $\sC$ is an essentially small, Krull-Schmidt $\Bk$-linear and
Hom-finite category.

The main result in this section is the following.

\begin{Theorem}
\label{tau tilting and torsion class 2}
There is a bijection $\sM\mapsto\Fac\,\sM$ from the first to the
second of the following sets.
\begin{itemize}

  \item[(i)] Support $\tau$-tilting pairs $(\sM,\sE)$ of $\mod\,\sC$. 

\smallskip

  \item[(ii)] Finitely generated torsion classes $\sT$ of $\Mod\,\sC$
    such that each finitely generated projective $\sC$-module has a
    left $\sP(\sT)$-approximation. 

\end{itemize}
\end{Theorem}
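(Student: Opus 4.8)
The plan is to imitate the proof of \cite[thm.\ 2.7]{AIR}, substituting direct homological arguments in the abelian category $\Mod\,\sC$ for the uses of Auslander--Reiten theory, which is unavailable at this level of generality. First I would verify that $\sM\mapsto\Fac\,\sM$ takes values in the set (ii). Since each representable $\sC(-,s)$ is projective, the evaluation functors $\Hom_{\Mod\,\sC}(\sC(-,s),-)$ are exact; combining this with Property (S) I would show that for every $m\in\sM$ the map $\Hom_{\Mod\,\sC}(\pi^m,X)$ is surjective for all $X\in\Fac\,\sM$ --- write $X$ as a quotient of a finite direct sum of objects of $\sM$, apply the exact functors $\Hom(P_i,-)$ to the corresponding short exact sequence, and chase the diagram back to the case already known for objects of $\sM$. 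From the projective presentation $P_1\xrightarrow{\pi^m}P_0\to m\to 0$ this yields $\Ext^1_{\Mod\,\sC}(m,X)=0$ for all $m\in\sM$ and $X\in\Fac\,\sM$. This single vanishing does two jobs: it shows $\sM\subseteq\sP(\Fac\,\sM)$, and it shows $\Fac\,\sM$ is closed under extensions (given $0\to L\to N\to Q\to 0$ with $L,Q\in\Fac\,\sM$, choose a surjection $m_0\twoheadrightarrow Q$ with $m_0\in\sM$, pull back to $0\to L\to N'\to m_0\to 0$, which splits because $\Ext^1(m_0,L)=0$, so $N$ is a quotient of $L\oplus m_0\in\Fac\,\sM$). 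Hence $\Fac\,\sM$ is a torsion class, and it is finitely generated by construction.

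The heart of the argument is the reverse inclusion $\sP(\Fac\,\sM)\subseteq\sM$. For $p\in\sP(\Fac\,\sM)$ one has (every object of $\Fac\,\sM$ being finitely generated) a surjection $m_0\twoheadrightarrow p$ with $m_0\in\sM$; if its kernel lay in $\Fac\,\sM$, then applying $\Ext^1(p,-)$ would split $p$ off as a summand of $m_0$, giving $p\in\sM$. So the task is to realise $p$ by such a presentation, and this is exactly where the support $\tau$-tilting hypothesis --- the approximation sequences $\sC(-,s)\xrightarrow{f}m^0\to m^1\to 0$ of Definition~\ref{definition:support-tau-tilting}(iv), with $f$ a left $\sM$-approximation --- must be fed in. In \cite{AIR} this step is carried by the Auslander--Reiten formula expressing $\Hom(X,\tau m)$ through $\Coker\Hom(\pi^m,X)$, and producing a replacement is where the real work lies. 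Granting $\sM=\sP(\Fac\,\sM)$, each approximation sequence is then a left $\sP(\Fac\,\sM)$-approximation of $\sC(-,s)$; and since every finitely generated projective $\sC$-module is a direct summand of a finite direct sum of representables, and a left $\sP(\Fac\,\sM)$-approximation of a direct sum restricts to one on each summand, every finitely generated projective admits a left $\sP(\Fac\,\sM)$-approximation. Thus the forward map is well defined.

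For the inverse I would send a torsion class $\sT$ as in (ii) to the pair $(\sM_\sT,\sE_\sT)$ with $\sM_\sT:=\sP(\sT)\cap\mod\,\sC$ and $\sE_\sT:=\Ker(\sM_\sT)$. Using that $\Ext^1_{\Mod\,\sC}(\sP(\sT),\sT)=0$, together with the good behaviour of projective presentations in the Krull--Schmidt category $\mod\,\sC$ (which is where the hypothesis on $\Bk$ enters), I would exhibit for each $m\in\sM_\sT$ a projective presentation with Property (S), so that $\sM_\sT$ is $\tau$-rigid; this is the mirror image of the crux above. The left $\sP(\sT)$-approximations $f\colon\sC(-,s)\to P_0$ supplied by (ii) (taken with finitely presented target) then serve as the sequences of Definition~\ref{definition:support-tau-tilting}(iv): $P_0\in\sM_\sT$, $f$ is a left $\sM_\sT$-approximation since $\sM_\sT\subseteq\sP(\sT)$, and $\Coker f$ lies in $\sT$ (a quotient of $P_0$) and is checked to be Ext-projective and finitely presented, so $(\sM_\sT,\sE_\sT)$ is support $\tau$-tilting. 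That $\Fac\,\sM_\sT=\sT$ holds because $\sT=\Fac\,\sN$ for some $\sN\subseteq\mod\,\sC$ and the left $\sP(\sT)$-approximations rewrite each generator in $\sN$ as a quotient of an object of $\sM_\sT$; conversely $\sM_\sT\subseteq\sT$ and $\sT$ is closed under quotients. Finally the two assignments are mutually inverse: the composite on (ii) is the identity by $\Fac\,\sM_\sT=\sT$, and the composite on (i) is the identity because $\sP(\Fac\,\sM)\cap\mod\,\sC=\sM$ by the first half, and because $\sE=\Ker(\sM)$ is forced by Definition~\ref{definition:support-tau-tilting}(iv). The main obstacle throughout is the pair of ``maximality'' identities $\sP(\Fac\,\sM)=\sM$ and $\Fac\,\sP(\sT)=\sT$, which have to be established by hand from the approximation sequences and projective presentations rather than through the Auslander--Reiten translate.
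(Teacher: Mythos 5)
Your architecture coincides with the paper's---send $\sM$ to $\Fac\,\sM$, recover $\sM$ as $\sP(\Fac\,\sM)$, and invert via $\sT\mapsto(\sP(\sT),\Ker)$---and the portions you actually argue (that Property (S) yields $\Ext^1_{\Mod\,\sC}(\sM,\Fac\,\sM)=0$, hence $\sM\subseteq\sP(\Fac\,\sM)$ and closure of $\Fac\,\sM$ under extensions) are correct and agree with Lemma \ref{tau-rigidity} and Proposition \ref{recover}. But the two steps you yourself single out as ``where the real work lies'' are the actual content of the theorem, and you leave both as placeholders. For $\sP(\Fac\,\sM)\subseteq\sM$ you write ``Granting $\sM=\sP(\Fac\,\sM)$'' and never supply the substitute for the Auslander--Reiten formula of \cite{AIR}; the paper does this in Proposition \ref{recover} by taking a projective presentation $P_1\xrightarrow{f}P_0\to n\to 0$ of an Ext-projective $n$, feeding in the approximation sequences $P_i\to m_i\to m'_i\to 0$ supplied by the support $\tau$-tilting hypothesis, passing to $\overline{\sC}=\sC/\ann\sM$ so that these become short exact, and combining two mapping-cone constructions with a cancellation to reach an exact sequence $0\to\Ker f\to m_1\to m'_1\oplus m_0\to m'_0\oplus n\to 0$ whose final epimorphism splits by Ext-projectivity, forcing $n\in\sM$. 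Likewise, for the inverse map you only promise to ``exhibit'' presentations with Property (S) from $\Ext^1_{\Mod\,\sC}(\sP(\sT),\sT)=0$; this is exactly the nontrivial implication of Lemma \ref{tau-rigidity}, which goes through the factorization condition $f=ad_1+fd_2b$ on minimal presentations, iterates it to get $\Hom(P_1,M)=\Hom(P_0,M)d_1+\Hom(P_1,M)(\rad\End(P_1))^n$ for all $n$, and concludes with Hom-finiteness over the commutative noetherian local ring $\Bk$ and Krull's intersection theorem. You correctly guess that this is where the hypothesis on $\Bk$ enters, but locating an argument is not giving it.

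Moreover, the one nontrivial step of the inverse direction that you do sketch fails as stated: you claim $\Fac\,\sM_\sT=\sT$ because left $\sP(\sT)$-approximations ``rewrite each generator in $\sN$ as a quotient of an object of $\sM_\sT$''. A left $\sP(\sT)$-approximation $\sC(-,s)\to m$ only factors morphisms from $\sC(-,s)$ into objects of $\sP(\sT)$; a surjection from a finitely generated projective onto $n\in\sN$, with $n$ merely in $\sT$, has no reason to factor through it, so this remark does not prove $\sT\subseteq\Fac\,\sP(\sT)$. (The paper's surjectivity step instead verifies directly that $\bigl(\sP(\sT),\bigcap_{m\in\sT}\Ker m\bigr)$ is a support $\tau$-tilting pair, the crux being $\Coker f\in\sP(\sT)$ for the given approximations $f$, proved via surjectivity of $\iota^{\ast}$ on $\Hom(-,m')$.) In sum, the proposal is a correct outline of the shape of the proof, but the pivotal facts---$\sP(\Fac\,\sM)\subseteq\sM$, $\tau$-rigidity of $\sP(\sT)$, and $\Fac\,\sP(\sT)=\sT$---are respectively assumed, promised, and incorrectly justified, so it does not constitute a proof.
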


We start with the following observation.

\begin{Lemma}
\label{tau-rigidity}
Let $\sM$ be a subcategory of $\mod\,\sC$. The following conditions
are equivalent.
\begin{itemize}

  \item[(i)] $\sM$ is $\tau$-rigid.

\smallskip

  \item[(ii)] $\Ext^1_{ \Mod\,\sC }( \sM , \Fac\,\sM ) = 0$.

\smallskip

\item[(iii)]  Each $m \in \sM$ has a minimal projective presentation
\[
  0 
  \to \Omega^2m
  \xrightarrow{d_2} P_1
  \xrightarrow{d_1} P_0
  \to m
  \to0
\]
such that for each $m' \in \sM$ and each morphism $f : P_1 \to m'$, 
there exist morphisms $a : P_0 \to m'$ and  $b : P_1 \to \Omega^2 m$ 
such that $f = ad_1 + fd_2b$.
\[
  \xymatrix{
    0 \ar[r] & \Omega^2 m \ar[r]^{d_2}
      & P_1 \ar[r]^{d_1} \ar[d]^f\ar@<1ex>[l]^b
      & P_0 \ar[r] \ar[dl]^a & m \ar[r] & 0\\
    & & m'
           }
\]
\end{itemize}
\end{Lemma}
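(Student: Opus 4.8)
The plan is to establish the cycle of implications (i) $\Rightarrow$ (iii) $\Rightarrow$ (ii) $\Rightarrow$ (i). The key technical device throughout will be the interpretation of $\tau$-rigidity of a collection $\{P_1\xrightarrow{\pi^m}P_0\to m\to0\}$ via Property (S), i.e. the surjectivity of $\Hom_{\Mod\,\sC}(\pi^m,m')$ for all $m,m'\in\sM$, together with the standard long exact sequence coming from applying $\Hom_{\Mod\,\sC}(-,m')$ to the short exact sequences $0\to\Omega^2 m\to P_1\xrightarrow{d_1}\Image d_1\to0$ and $0\to\Image d_1\to P_0\to m\to0$ (and the vanishing of $\Ext^1_{\Mod\,\sC}(P_i,-)$).

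First I would prove (i) $\Rightarrow$ (iii). Assume $\sM$ is $\tau$-rigid, witnessed by some family of projective presentations with Property (S). Fix $m\in\sM$. Since $\sC$ is Krull--Schmidt and Hom-finite over a commutative noetherian local ring, $\mod\,\sC$ has minimal projective presentations, so write one as $0\to\Omega^2m\xrightarrow{d_2}P_1\xrightarrow{d_1}P_0\to m\to0$. A minimal projective presentation is a direct summand of any projective presentation (up to adding trivial summands $P\xrightarrow{\id}P$), so the surjectivity of $\Hom_{\Mod\,\sC}(\pi^m,m')$ transfers to surjectivity of $\Hom_{\Mod\,\sC}(d_1,m')\colon\Hom(P_0,m')\to\Hom(P_1,m')$ for all $m'\in\sM$. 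Now given $f\colon P_1\to m'$, I want to split off the contribution that factors through $\Image d_1$: decompose $f$ using the exact sequence $\Hom(P_0,m')\xrightarrow{d_1^*}\Hom(P_1,m')\to\Hom(\Omega^2 m,m')$. Surjectivity of $d_1^*$ says $f-ad_1$ vanishes on the image of $d_1^*$... more precisely, the point is that $f|_{\Omega^2 m}\colon\Omega^2m\to m'$, precomposed into $P_1$ via... here one needs to be a little careful and write $f = ad_1 + (\text{something vanishing on }\Image d_1)$, and then identify that ``something'' with $fd_2 b$ for a suitable $b\colon P_1\to\Omega^2 m$; this uses that $f$ restricted to $\Omega^2m$ lifts, which is exactly what Property (S) gives once one notes $\Omega^2 m \in \Fac(P_1)$ and any morphism $\Omega^2 m\to m'$ extends appropriately. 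I expect this bookkeeping — translating ``Property (S)'' into the explicit equation $f=ad_1+fd_2b$ with the diagram as drawn — to be the main obstacle, since it requires pinning down exactly which lifting is being invoked and in which direction the maps go.

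Next, (iii) $\Rightarrow$ (ii): I would show $\Ext^1_{\Mod\,\sC}(m,N)=0$ for every $m\in\sM$ and every $N\in\Fac\,\sM$. Taking the minimal presentation from (iii), $\Ext^1_{\Mod\,\sC}(m,N)$ is the cokernel of $\Hom(P_0,N)\to\Hom(\Image d_1,N)$, equivalently (using $\Hom(P_1,N)\twoheadrightarrow\Hom(\Image d_1, N)$ and the sequence with $\Omega^2 m$) it is computed from the complex $\Hom(P_0,N)\xrightarrow{d_1^*}\Hom(P_1,N)\xrightarrow{d_2^*}\Hom(\Omega^2 m,N)$. Given $N = $ quotient of $\bigoplus m'_j$ with $m'_j\in\sM$, lift an element of $\Hom(P_1,N)$ killed by $d_2^*$ to $f\colon P_1\to\bigoplus m'_j$ (possible since $P_1$ is projective), apply the equation $f=ad_1+fd_2 b$ from (iii), and push down to $N$: the term $fd_2 b$ dies because $d_2^*[f]=0$ pushes to $fd_2=0$ in $\Hom(\Omega^2 m,N)$... again requiring a short diagram chase, but a routine one. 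This shows every cocycle is a coboundary, i.e. $\Ext^1_{\Mod\,\sC}(m,N)=0$, hence (ii).

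Finally, (ii) $\Rightarrow$ (i) is the easiest direction: assuming $\Ext^1_{\Mod\,\sC}(\sM,\Fac\,\sM)=0$, pick for each $m\in\sM$ a minimal (or any) projective presentation $P_1\xrightarrow{\pi^m}P_0\to m\to0$. For $m'\in\sM$, apply $\Hom_{\Mod\,\sC}(-,m')$ to $0\to\Image\pi^m\to P_0\to m\to0$; since $\Image\pi^m$ is a quotient of $P_1$, hence lies in $\Fac(\prj\,\sC)$, but more to the point $\Image\pi^m\in\Fac\,\sM$? — no, one instead uses $0\to\Omega^2 m\to P_1\to\Image\pi^m\to 0$ and observes that the obstruction to lifting $\Hom(P_1,m')\to\Hom(\Omega^2 m,m')$ vanishing lands in $\Ext^1(\Image\pi^m,m')$, which is a quotient of $\Ext^1(m,m')=0$ by (ii) applied with $N=m'\in\sM\subseteq\Fac\,\sM$, and $\Ext^1(P_0,m')=0$. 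Hence $\Hom(\pi^m,m')$ is surjective for all $m,m'$, so this family witnesses Property (S) and $\sM$ is $\tau$-rigid. Throughout, the only facts used beyond homological algebra are the existence of minimal projective presentations in $\mod\,\sC$ (guaranteed by the Krull--Schmidt Hom-finite hypothesis on $\sC$) and the elementary $\Ext$ long exact sequences.
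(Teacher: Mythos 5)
Your steps (i)$\Rightarrow$(iii) and (iii)$\Rightarrow$(ii) are essentially fine: once you transfer Property (S) to the minimal presentation via the Krull--Schmidt decomposition of an arbitrary presentation as (minimal) $\oplus$ (trivial) summands, the surjectivity of $\Hom(d_1,m')$ gives $f=ad_1$, so (iii) holds trivially with $b=0$ (there is no delicate bookkeeping there); and your lift-and-push argument for (iii)$\Rightarrow$(ii) ($\phi=qf=(qa)d_1+(\phi d_2)b=(qa)d_1$) is correct.

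The genuine gap is (ii)$\Rightarrow$(i), which you call the easiest direction but which is in fact the whole content of the lemma, and your obstruction computation is wrong. Since $P_0$ is projective, the sequence $0\to\Image\pi^m\to P_0\to m\to0$ gives $\Ext^1_{\Mod\,\sC}(\Image\pi^m,m')\cong\Ext^2_{\Mod\,\sC}(m,m')$; it is \emph{not} a quotient of $\Ext^1_{\Mod\,\sC}(m,m')$, and (ii) says nothing about $\Ext^2$. More fundamentally, $\Ext^1$-vanishing only shows that those $f:P_1\to m'$ which kill $\Omega^2m$ factor through $d_1$; Property (S) requires \emph{every} $f:P_1\to m'$ to factor through $d_1$, and controlling the component of $f$ on $\Omega^2m$ is exactly the hard point. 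Note also that your argument uses only $\Ext^1(\sM,\sM)=0$ (rigidity), never $\Fac\,\sM$, minimality, or the hypotheses that $\sC$ is Hom-finite over a commutative noetherian local $\Bk$ --- and rigidity is well known to be strictly weaker than $\tau$-rigidity, so no argument of this purely formal type can close the cycle. The paper's proof of the return direction goes (ii)$\Rightarrow$(iii) by a diagram chase that crucially uses extension-vanishing against factor modules $n'=m'/\Image(fd_2)\in\Fac\,\sM$, and then (iii)$\Rightarrow$(i) by iterating the identity $f=ad_1+fd_2b$, using that $d_2$ lies in the radical (minimality), that $(\rad\End(P_1))^\ell\subseteq\End(P_1)(\rad\Bk)$ by Hom-finiteness, and Krull's intersection theorem. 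In your cycle, since your (i)$\Rightarrow$(iii) makes (iii) hold for the trivial reason $b=0$, condition (iii) is never exploited, and the iteration argument that actually recovers Property (S) is missing; without it the proof does not go through.
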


\begin{proof}
(i)$\Rightarrow$(ii): For each $m\in\sM$, there exists a projective
presentation $P_1\xrightarrow{\pi}P_0\to m\to0$ such that
$\Hom_{\Mod\,\sC}(\pi,m')$ is surjective for each $m'\in\sM$.  Let $n
\in \Fac\, \sM$ be given and pick an epimorphism $p:m'\to n$ with
$m'\in\sM$.  To show $\Ext^1_{\Mod\,\sC}(m,n)=0$, it is enough to show
that each $f\in\Hom_{\Mod\,\sC}(P_1,n)$ factors through $\pi$.  Since
$p$ is an epimorphism and $P_1$ is projective, there exists $g:P_1\to
m'$ such that $f=pg$. Then there exists $h:P_0\to m'$ such that
$g=h\pi$ by the property of $\pi$.
\[
  \xymatrix{
    P_1\ar[r]^{\pi}\ar[dr]|(.35)f\ar[d]_g
      &P_0\ar[r]\ar[dl]|(.35)h&m\ar[r]&0\\
    m'\ar[r]_p&n
           }
\]
Thus $f=ph\pi$ holds, and we have the assertion.

\smallskip

(ii)$\Rightarrow$(iii): For each $m \in \sM$, take a minimal
projective presentation $0\to\Omega^2m\xrightarrow{d_2} P_1\xrightarrow{d_1}P_0\to
m\to0$.  Let $m' \in \sM$ and $f : P_1 \to m'$ be given, set
$n:=\Image(fd_2)$ and let $0 \to n \xrightarrow{\iota} m'
\xrightarrow{\pi} n' \to0$ be an exact sequence.  Then $\pi f:P_1\to
n'$ factors through $P_1\to\Image d_1$.  Since $n'\in\Fac\,\sM$ and
$\Ext^1_{\Mod\,\sC}(m,\Fac\,\sM)=0$, there exists $g:P_0\to n'$ such
that $gd_1=\pi f$.
\[
  \xymatrix{
    0\ar[r]&\Omega^2m\ar[r]^{d_2}\ar[d]_{f'}&P_1\ar[r]\ar[d]^f
      &\Image d_1\ar[r]\ar[d]&0\\
    0\ar[r]&n\ar[r]_{\iota}&m'\ar[r]_{\pi}&n'\ar[r]&0
           }\ \ \ \ \
  \xymatrix{
    0\ar[r]&\Image d_1\ar[r]\ar[d]&P_0\ar[r]\ar[dl]^g&m\ar[r]&0\\
    &n'
           }
\]
Since $\pi$ is an epimorphism and $P_0$ is projective, there exists
$a:P_0\to m'$ such that $g=\pi a$.  Since $\pi(f-ad_1)=0$, there
exists $h:P_1\to n$ such that $f=ad_1+\iota h$.  Since $f'$ is
surjective (by definition of $n$) and $P_1$ is projective,
there exists $b:P_1\to\Omega^2m$ such that $h=f'b$.
\[
  \xymatrix{
    0\ar[r]&\Omega^2m\ar[r]^{d_2}\ar[d]^{f'}
      &P_1\ar[r]^{d_1}\ar[d]^f\ar@<1ex>[l]^b\ar[dl]^h
      &P_0\ar[r]\ar[dl]^a\ar[d]^g&m\ar[r]&0\\
    0\ar[r]&n\ar[r]_{\iota}&m'\ar[r]_{\pi}&n'\ar[r]&0
           }
\]
Then we have $f = ad_1 + \iota f'b = ad_1 + fd_2b$.
\smallskip

(iii)$\Rightarrow$(i): For each $m \in \sM$, take a minimal
projective presentation
$0\to\Omega^2m\xrightarrow{d_2} P_1\xrightarrow{d_1}P_0\to m\to0$
satisfying the assumption in (iii).
We need to show that each $f:P_1\to m'$ with
$m'\in\sM$ factors through $d_1$.  By our assumption, there exist
$a:P_0\to m'$ and $b:P_1\to\Omega^2m$ such that $f=ad_1+fd_2b$.
Applying our assumption to $fd_2b:P_1\to m'$, there exist $a':P_0\to
m'$ and $b':P_1\to\Omega^2m$ such that $fd_2b=a'd_1+fd_2bd_2b'$.  Thus
$f=(a+a')d_1+fd_2bd_2b'$ holds.  Repeating a similar argument gives
\[
  \Hom_{\Mod\,\sC}(P_1,M)
  = \Hom_{\Mod\,\sC}(P_0,M)d_1
    + \Hom_{\Mod\,\sC}(P_1,M)(\rad\End_{\Mod\,\sC}(P_1))^n
\]
for each $n \geq 1$ since $d_2 \in \rad \Hom_{\Mod\, \sC}( \Omega^2 M
, P_1 )$.  Since $\sC$ is Hom-finite over $\Bk$, we have
$(\rad\End_{\Mod\,\sC}(P_1))^\ell\subset\End_{\Mod\,\sC}(P_1)(\rad\Bk)$
for sufficiently large $\ell$. Thus we have
\[
  \Hom_{\Mod\,\sC}( P_1 , M )
  = \bigcap_{n\ge0} \big( \Hom_{\Mod\,\sC}( P_0 , M )d_1
                     + \Hom_{\Mod\,\sC}( P_1 , M )( \rad\Bk )^n \big).
\] 
The right hand side is equal to $\Hom_{\Mod\,\sC}(P_0,M)d_1$ itself by Krull's intersection Theorem \cite{M}.
\end{proof}

\begin{Proposition}
\label{recover}
Let $(\sM,\sE)$ be a support $\tau$-tilting pair of $\mod\,\sC$.  Then
$\Fac\,\sM$ is a finitely generated torsion class with
$\sP(\Fac\,\sM)=\sM$.
\end{Proposition}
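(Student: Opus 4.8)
The plan is to establish the three claims in turn — $\Fac\,\sM$ is a torsion class, it is finitely generated, and $\sP(\Fac\,\sM)=\sM$ — the last being where the real work sits, and within it the inclusion $\sP(\Fac\,\sM)\subseteq\sM$. Closure of $\Fac\,\sM$ under factor modules is immediate from the definition. For closure under extensions, take an exact sequence $0\to X_1\to X\to X_2\to0$ with $X_1,X_2\in\Fac\,\sM$, fix epimorphisms $m_i\twoheadrightarrow X_i$ with $m_i\in\sM$, and form the pull-back $0\to X_1\to X'\to m_2\to0$ of $X\to X_2$ along $m_2\twoheadrightarrow X_2$. Since $\sM$ is $\tau$-rigid, Lemma~\ref{tau-rigidity} gives $\Ext^1_{\Mod\,\sC}(m_2,X_1)=0$, so $X'\cong X_1\oplus m_2$; as the pull-back projection $X'\to X$ is an epimorphism, $X$ is a factor module of $X_1\oplus m_2$, hence of $m_1\oplus m_2\in\sM$, so $X\in\Fac\,\sM$. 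That $\Fac\,\sM$ is finitely generated is Definition~\ref{def:tors}(iii), as $\sM\subseteq\mod\,\sC$; and $\sM\subseteq\sP(\Fac\,\sM)$ because each $m\in\sM$ is a factor module of itself and satisfies $\Ext^1_{\Mod\,\sC}(m,\Fac\,\sM)=0$ by Lemma~\ref{tau-rigidity}.

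For $\sP(\Fac\,\sM)\subseteq\sM$, the engine is the observation that each left $\sM$-approximation $f\colon\sC(-,s)\to m^0$ furnished by the support $\tau$-tilting axiom is actually a left $\Fac\,\sM$-approximation: any $\varphi\colon\sC(-,s)\to X$ with $X\in\Fac\,\sM$ lifts, by projectivity of $\sC(-,s)$, along an epimorphism $m''\twoheadrightarrow X$ with $m''\in\sM$, the lift factors through $f$ because $m''\in\sM$, and composing down shows $\varphi$ factors through $f$. Now let $t\in\sP(\Fac\,\sM)$; being finitely generated it admits an epimorphism $\sC(-,s)\twoheadrightarrow t$, and factoring this through $f$ gives an epimorphism $m^0\twoheadrightarrow t$ with $m^0\in\sM$. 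Put $\sM^+:=\add(\sM\cup\{t\})$. Then $\Fac\,\sM^+=\Fac\,\sM$ (as $t\in\Fac\,\sM$ and $\Fac\,\sM$ is closed under finite sums and summands), so Lemma~\ref{tau-rigidity} applied to $\sM$, together with $\Ext$-projectivity of $t$, gives $\Ext^1_{\Mod\,\sC}(\sM^+,\Fac\,\sM^+)=0$; hence $\sM^+$ is $\tau$-rigid. Every object of $\Fac\,\sM$ kills $\sE=\Ker(\sM)$ (being a factor of an object of $\sM$), so $\sM^+(\sE)=0$ and $\Ker(\sM^+)=\sE$; and the support $\tau$-tilting axiom persists for $(\sM^+,\sE)$ with the same maps $f$, each of which is now a left $\sM^+$-approximation (since $\sM^+\subseteq\Fac\,\sM$ and $f$ is a left $\Fac\,\sM$-approximation) with cokernel $m^1\in\sM^+$. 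Thus $(\sM^+,\sE)$ is a support $\tau$-tilting pair containing $(\sM,\sE)$.

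It remains to conclude $\sM^+=\sM$, i.e.\ $t\in\sM$; this is the maximality of support $\tau$-tilting pairs, and the step I expect to be the main obstacle. I would obtain it from the bijection of Section~3: regarding $\sC$ as a silting subcategory of $\sT:=\K^{b}(\sC)$ one has $\sS=\sC$, $\mod\,\sS=\mod\,\sC$, and $\sS*\Sigma\sS$ is Krull--Schmidt, so Theorem~\ref{silting and support tau tilting} identifies support $\tau$-tilting pairs of $\mod\,\sC$ with silting subcategories of $\sT$ lying in $\sS*\Sigma\sS$; under this bijection $(\sM,\sE)\subseteq(\sM^+,\sE)$ corresponds to an inclusion $\sU\subseteq\sU^+$ of silting subcategories, which must be an equality by \cite[Theorem~2.18]{AI}, whence $\sM^+=\sM$. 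A minor point to dispatch along the way is that objects of $\sP(\Fac\,\sM)$ are finitely presented — needed for $\sM^+$ to lie in $\mod\,\sC$ — which emerges once $t$ is realized as a summand of $m^0\in\sM$, or can be arranged beforehand. Alternatively, one proves this maximality directly and keeps the argument inside $\mod\,\sC$.
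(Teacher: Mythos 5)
Your first half is fine and essentially matches the paper: closure of $\Fac\,\sM$ under extensions via $\Ext^1_{\Mod\,\sC}(m,x)=0$ from Lemma \ref{tau-rigidity} is the paper's argument (you phrase it with a pull-back, the paper factors the epimorphism through the extension; these are equivalent), and $\sM\subseteq\sP(\Fac\,\sM)$ is identical. The genuine gap is in the crucial inclusion $\sP(\Fac\,\sM)\subseteq\sM$, which you reduce to a maximality statement for support $\tau$-tilting pairs obtained by lifting to silting subcategories of $\K^b(\sC)$. That reduction is not available in the stated generality: Theorem \ref{silting and support tau tilting} needs every object of $\sS*\Sigma\sS$ to decompose uniquely into indecomposables, and for $\sT=\K^b(\sC)$ this does not follow from the standing hypotheses of this section ($\Bk$ is noetherian local but not complete, and only $\sC$ is assumed Krull--Schmidt; Hom-finiteness over a non-complete local ring does not make the category of two-term complexes up to homotopy Krull--Schmidt, since endomorphism rings need not be semiperfect). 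Two further steps are unproved: (a) you need $t\in\sP(\Fac\,\sM)$ to be finitely presented before you may form $\sM^{+}\subseteq\mod\,\sC$, apply Lemma \ref{tau-rigidity} to it, or feed the pair $(\sM^{+},\sE)$ into the Section~3 correspondence; your remark that this ``emerges once $t$ is realized as a summand of $m^{0}$'' is circular, because exhibiting $t$ as a summand of an object of $\sM$ is exactly what is to be proved; (b) even granting the bijection, the claim that $(\sM,\sE)\subseteq(\sM^{+},\sE)$ corresponds to an inclusion $\sU\subseteq\sU^{+}$ of silting subcategories requires an argument (the bijection is not a priori inclusion-preserving; one must construct the preimages compatibly, or again invoke unique decomposition).

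The paper avoids all of this by a direct computation inside $\Mod\,\sC$: for an Ext-projective $n$ with projective presentation $P_1\to P_0\to n\to 0$, it splices the left $\sM$-approximation sequences $0\to\overline{P_i}\to m_i\to m_i'\to 0$ over $\overline{\sC}=\sC/\ann\sM$ with the presentation, takes mapping cones twice, cancels a contractible summand to obtain an exact sequence $0\to\Ker f\to m_1\xrightarrow{c} m_1'\oplus m_0\xrightarrow{d} m_0'\oplus n\to 0$, and then uses $\Image c\in\Fac\,\sM$ together with Ext-projectivity of $m_0'\oplus n$ to split $d$, so that $n$ is a direct summand of $m_1'\oplus m_0$ and hence lies in $\sM$. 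No Krull--Schmidt, completeness, or silting-theoretic input enters. To repair your proposal you would either have to prove the maximality of support $\tau$-tilting pairs directly in $\mod\,\sC$ (which in substance is the paper's computation) or impose extra hypotheses, such as completeness of $\Bk$, that are not part of the proposition.
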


\begin{proof}
(i) We show that $\Fac\,\sM$ is a torsion class. 
Clearly $\Fac\,\sM$ is closed under factor modules. 
We show that $\Fac\,\sM$ is closed under extensions.
Let $0\to x\to y\xrightarrow{f} z\to0$ be an exact sequence in $\Mod\,\sC$ such that $x,z\in\Fac\,\sM$.
Take an epimorphism $p:m\to z$ with $m\in\sM$. Since $\Ext^1_{\Mod\,\sC}(m,x)=0$ holds by Lemma \ref{tau-rigidity}(ii), we have that $p$ factors through $f$.
Thus we have an epimorphism $x\oplus m\to y$, and $y\in\Fac\,\sM$ holds.
Hence $\Fac\,\sM$ is a torsion class.

\smallskip

(ii) Since $\Ext^1_{\Mod\,\sC}(\sM,\Fac\,\sM)=0$ holds by Lemma
\ref{tau-rigidity}(ii), each object in $\sM$ is Ext-projective in
$\Fac\,\sM$.  It remains to show that if $n$ is an Ext-projective
object in $\Fac\,\sM$, then $n\in\sM$.  Let
$P_1\xrightarrow{f}P_0\xrightarrow{e} n\to0$ be a projective
presentation.  Since $\sM$ is support $\tau$-tilting, there exist
exact sequence $P_i\xrightarrow{g_i}m_i\xrightarrow{h_i}m'_i\to0$ with
$m_i,m'_i\in\sM$ and a left $\sM$-approximation $g_i$ for $i=0,1$.

Let $\overline{\sC}:=\sC/\ann\sM$ for the annihilator ideal $\ann\sM$ of $\sM$ and $\overline{P_i}:=P_i\otimes_{\sC}\overline{\sC}$.
Then we have induced exact sequences $0\to\overline{P_i}\xrightarrow{g_i}m_i\xrightarrow{h_i}m'_i\to0$ for $i=0,1$ and
$\overline{P_1}\xrightarrow{f}\overline{P_0}\xrightarrow{e} n\to0$.
We have a commutative diagram
\[
  \xymatrix{
    0\ar[r]&\overline{P_1}\ar[r]^{g_1}\ar[d]^f&m_1\ar[r]^{h_1}\ar[d]^a&m'_1\ar[r]\ar[d]^{b}&0\\
    0\ar[r]&\overline{P_0}\ar[r]_{g_0}&m_0\ar[r]_{h_0}&m'_0\ar[r]&0
           }
\]
of exact sequences. Taking a mapping cone, we have an exact sequence
\[
  \xymatrix{
    0\ar[r]
      & \overline{P_1}\ar[rr]^-{{\left[\begin{smallmatrix}g_1\\
              f\end{smallmatrix}\right]}}
      & & m_1\oplus\overline{P_0}\ar[rr]^-{
{\left[\begin{smallmatrix}h_1 & 0 \\ a & -g_0\end{smallmatrix}\right]}
                                        }
      & & m'_1\oplus m_0\ar[rr]^-{{\left[\begin{smallmatrix}b
      & -h_0\end{smallmatrix}\right]}}
      & & m'_0\ar[r]
      & 0.
           }
\]
Since $\Ext^1_{\Mod\,\sC}(m'_0,n)=0$ holds by Lemma \ref{tau-rigidity}(ii), we have the following commutative diagram.
\[
  \xymatrix{
    &0\ar[r]&\overline{P_1}\ar[r]^-{{\left[\begin{smallmatrix}g_1\\ f\end{smallmatrix}\right]}}\ar@{=}[d]&m_1\oplus\overline{P_0}\ar[rr]^-{{\left[\begin{smallmatrix}h_1&0\\ a&-g_0\end{smallmatrix}\right]}}\ar[d]^{{\left[\begin{smallmatrix}0&1\end{smallmatrix}\right]}}&&m'_1\oplus m_0\ar[rr]^-{{\left[\begin{smallmatrix}b&&-h_0\end{smallmatrix}\right]}}\ar@{.>}[d]&&m'_0\ar[r]&0\\
    0\ar[r]&\Ker f\ar[r]&\overline{P_1}\ar[r]_f&\overline{P_0}\ar[rr]_e&&n\ar[rr]&&0
            }
\]
Taking a mapping cone, we have an exact sequence
\[\xymatrix{
0\ar[r]&\overline{P_1}\oplus\Ker f\ar[r]&m_1\oplus\overline{P_0}\oplus\overline{P_1}\ar[r]&m'_1\oplus m_0\oplus\overline{P_0}\ar[r]&m'_0\oplus n\ar[r]&0.
}\]
Cancelling a direct summand of the form
$\overline{P_1}\xrightarrow{{\left[\begin{smallmatrix}0\\ 1\end{smallmatrix}\right]}}
\overline{P_0}\oplus \overline{P_1}\xrightarrow{{\left[\begin{smallmatrix}1&0\end{smallmatrix}\right]}}\overline{P_0}$, we have an exact sequence
\[\xymatrix{
0\ar[r]&\Ker f\ar[r]&m_1\ar[r]^-c&m'_1\oplus m_0\ar[r]^-d&m'_0\oplus n\ar[r]&0.
}\]
Since $\Image c\in\Fac\,\sM$ and $m'_0\oplus n$ is Ext-projective in $\Fac\,\sM$, the epimorphism $d$ splits.
Thus $n \in \sM$ as desired.
\end{proof}

Now we are ready to prove Theorem \ref{tau tilting and torsion class 2}.

Let $\sM$ be a support $\tau$-tilting subcategory of $\mod\, \sC$. By definition, each representable $\sC$-module has a left $\sM$-approximation. Since $\sP(\Fac\,\sM)=\sM$ holds by
Proposition \ref{recover}, the map $\sM\mapsto\Fac\,\sM$ is
well-defined from the set (i) to the set (ii) and 
it is injective.

We show that the map is surjective.  For $\sT$ in the set described in
(ii), let $\sE := \bigcap_{ m \in \sT} \Ker m$ and $\sM := \sP( \sT
)$.  We will show that $( \sM , \sE )$ is a support $\tau$-tilting pair of $\mod\, \sC$.
Since $\Ext^1_{ \Mod\, \sC }( \sM , \sT ) = 0$ and $\Fac\, \sM
\subset \sT$ hold, it follows from Lemma \ref{tau-rigidity} that
$\sM$ is $\tau$-rigid.  For $s \in \sC$, take a left
$\sM$-approximation $\sC(-,s)\xrightarrow{f} m$.

It remains to show $\Coker f\in\sM$. Since
$\sP(\Fac\, \sM)=\sM$ holds by Proposition \ref{recover},
we only have to show $\Ext^1_{\Mod\,\sC}(\Coker f,m')=0$ for each $m'\in\sM$.
Let $f=\iota\pi$ for $\pi:\sC(-,s)\to\Image f$ and $\iota:\Image f\to
m$. Applying $\Hom_{\Mod\,\sC}(-,m')$ to the exact sequence
$0\to\Image f\xrightarrow{\iota}m\to \Coker f\to0$, we have an exact
sequence
\begin{eqnarray*}
  \Hom_{\Mod\,\sC}(m,m')\xrightarrow{\iota^*}\Hom_{\Mod\,\sC}(\Image f,m')
  &\to&\Ext^1_{\Mod\,\sC}(\Coker f,m')\\
  &\to&\Ext^1_{\Mod\,\sC}(m,m')=0.
\end{eqnarray*}
Let $g:\Image f\to m'$ be a morphism in $\Mod\,\sC$.
Since $f$ is a left $\sM$-approximation, there exists $h:m\to m'$ such that $g\pi=hf$.
Then $g=h\iota$ holds. Thus $\iota^{\ast}
:\Hom_{\Mod\,\sC}(m,m')\to\Hom_{\Mod\,\sC}(\Image f,m')$ is
surjective, and we have $\Ext^1_{\Mod\,\sC}(\Coker f,m')=0$.
Consequently we have $\Coker f\in\sP(\sT)=\sM$. Thus the assertion follows.
\qed



\end{document}